\numberwithin{equation}{section} 
\renewcommand{\Re}{\mathop{\rm Re}\nolimits}
\renewcommand{\Im}{\mathop{\rm Im}\nolimits}
\newtheorem{theorem}{Theorem}
\newtheorem{proposition}{Proposition}
\newtheorem{remark}{Remark}
\newtheorem{corollary}{Corollary}
\begin{document}
\title{Long-time asymptotics for the integrable nonlocal focusing  nonlinear Schr\"odinger equation for a family of step-like initial data}
\author{Ya. Rybalko$^{\dag}$ and D. Shepelsky$^{\dag,\ddag}$\\
 \small\em {}$^\dag$ B.Verkin Institute for Low Temperature Physics and Engineering\\ \small\em {} of the National Academy of Sciences of Ukraine\\
 \small\em {}$^\ddag$ V.Karazin Kharkiv National University}

\date{}

\maketitle

\begin{abstract}
We study the Cauchy problem for 
the integrable  nonlocal focusing nonlinear Schr\"odinger (NNLS) equation
$
iq_{t}(x,t)+q_{xx}(x,t)+2 q^{2}(x,t)\bar{q}(-x,t)=0
$
with the  step-like initial data close to the ``shifted step function'' 
$\chi_R(x)=AH(x-R)$, where 
$H(x)$ is the Heaviside step function, and $A>0$ and $R>0$ are arbitrary constants. 
Our main aim is to study the large-$t$ behavior of the solution of this problem.  
We show that for 
$R\in\left(\frac{(2n-1)\pi}{2A},\frac{(2n+1)\pi}{2A}\right)$, $n=1,2,\dots$,
the $(x,t)$ plane splits into 
 $4n+2$ sectors exhibiting different  asymptotic behavior. Namely, there are
 $2n+1$ sectors where the solution decays to $0$, whereas in the other $2n+1$ sectors
(alternating with the sectors with decay), the solution approaches  (different) constants along each ray $x/t=const$. 
Our main technical tool is the representation
of the solution of the Cauchy problem in terms of the solution of an associated matrix Riemann-Hilbert problem and its subsequent  asymptotic analysis following the ideas of
nonlinear steepest descent method.
\end{abstract}

\section{Introduction}
We consider the  Cauchy problem for the integrable nonlocal focusing 
nonlinear Schr\"odinger (NNLS) equation with  step-like initial data:
\begin{subequations}
\label{1}
\begin{align}
\label{1-a}
& iq_{t}(x,t)+q_{xx}(x,t)+2q^{2}(x,t)\bar{q}(-x,t)=0,\qquad x\in\mathbb{R},\,t>0,\\
\label{1-b}
& q(x,0)=q_0(x), \qquad x\in\mathbb{R},
\end{align}
where
\begin{equation}
\label{shifted-step-gen}
q_0(x)\to
\begin{cases}
0,\quad x\to -\infty,\\
A,\quad x\to \infty,
\end{cases}
\end{equation}
\end{subequations}
with some $A>0$.
Here and below $\bar{q}$ denotes the complex conjugate of $q$.
We assume also that the solution $q(x,t)$ satisfies the  boundary conditions 
(\ref{shifted-step-gen}) for all 
$t>0$:
\begin{equation}
\label{2}
q(x,t)\to
\begin{cases}
0,\quad x\to -\infty,\\
A,\quad x\to \infty,
\end{cases}
\end{equation}
where the convergence is sufficiently fast.

The NNLS equation (\ref{1-a}) was introduced by Ablowitz and Musslimani in \cite{AMP} 
as a  reduction $r(x,t)=-\bar{q}(-x,t)$ in
the coupled system of nonlinear Schr\"odinger equations:
\begin{subequations}\label{ss}
\begin{align}
& iq_t(x,t)+q_{xx}(x,t)-2q^2(x,t)r(x,t)=0,\\
& ir_t(x,t)-r_{xx}(x,t)+2r^2(x,t)q(x,t)=0
\end{align}
\end{subequations}
(see also \cite{F16} for multidimensional versions of the NNLS)
and has attracted much attention in recent years due to its distinctive properties. Particularly, the NNLS equation is  $PT$ symmetric \cite{BB}, i.e., if $q(x,t)$ is its solution, so is $\bar{q}(-x,-t)$. 
Therefore, the NNLS equation is closely related to the $PT$ symmetric theory, which is a
field  in modern physics being actively studied  (see e.g. \cite{BKM, B, ZB, KYZ, GA} and references therein). Also this equation has unusual properties of the exact soliton and breather solutions, particularly, solitons could blow up in finite time and the NNLS equation supports both dark and anti-dark soliton solutions simultaneously (see e.g. \cite{AMFL, AMN, SMMC, GP, MS, V, Y} and references therein; see also \cite{Ybook}, where the general soliton solutions for the coupled 
Schr\"odinger equations (\ref{ss}) are found).

Apart from deriving  exact solutions of the NNLS equation, it is  important, in the both mathematical and physical perspective, to consider initial value problems with general initial data. The NNLS equation is an integrable system, i.e. it is a compatibility condition of the two linear equations, the so-called Lax pair (see (\ref{LP}) below) and therefore it can be, in principle, treated by the powerful Inverse Scattering Transform (IST) method \cite{AS, FT, NMPZ}. This method allows  reducing the original nonlinear problem to a sequence of linear ones, and in this sense to find the ``exact'' representation of the solution. The IST method was successfully applied in \cite{AMN} to the Cauchy problem for the NNLS equation on the whole line in the class of functions rapidly decaying as $x\to\pm\infty$  (see also \cite{GS}, where the complete integrability of (\ref{1-a}) in this class was proved).

Although the IST method provides some sort of exact formulas for the solutions, the qualitative analysis of the Cauchy problem for the NNLS equation, particularly, the long-time asymptotics of its solution, is a challenging problem. By using the IST method, the original problem for an integrable system can be reduced to the matrix Riemann-Hilbert (RH) factorization problem  in the complex plane of the spectral parameter. The jump matrix in this problem is oscillatory, which allows applying  the so-called  nonlinear steepest decent method \cite{DZ} for studying its long-time behavior. 
This method was inspired by earlier works of Manakov \cite{M} and Its \cite{I1} and finally put in the rigorous and systematic shape by Deift and Zhou \cite{DZ} (see also \cite{DIZ, DVZ94, DVZ97, BV07, DKKZ, MM} and references therein concerning the Deift and Zhou method and its extensions). The nonlinear steepest decent method consists in  series of transformations of the original RH problem in such a way that for the large values of a parameter (say, 
 the time $t$ in the original nonlinear evolutionary equation), this problem can be solved explicitly, in terms of the special functions (e.g., the parabolic cylinder functions, Riemann theta functions, Painleve transcendents etc.).


The step-like boundary values were considered for the variety of integrable systems, which include the Korteweg-de Vries equation \cite{GP73, AE1, EG, H}, the focusing and defocusing NLS equations \cite{BKS, BFP}, the Toda lattice \cite{DKKZ, VDO}, the modified Korteweg-de Vries equation \cite{KM} among many others. For such conditions a wide range of important physical phenomena of the solutions for the large $t$ are manifested, e.g. collisionless and dispersive shock waves \cite{GP}, rarefaction waves \cite{J}, asymptotic solitons \cite{KK}, modulated waves \cite{VDO}, elliptic waves \cite{BKS}, trapped solitons \cite{ALC} to name but a few. With regard to the (focusing) classical nonlinear Schr\"odinger equation with nonzero boundary conditions with equal absolute values \cite{BK} it is known the so-called modulated instability (Benjamin-Feir instability in the context of water waves) phenomenon, which has been suggested as a possible mechanism for the generation of rogue waves \cite{OOS}.
In \cite{RS2} we present the large-time analysis of problem (\ref{1})-(\ref{2})
in the case of initial data close, in a sense,  to the ``pure step function 
with the step located at $x=0$'': $q_0(x)=0$ for $x<0$ and 
$q_0(x)=A$ for $x>0$. In that case it is shown that 
 the solution  has two qualitatively different asymptotic regions,
 as $t\to\infty$, in the half-plane $-\infty<x<\infty$, $t>0$.
 Namely, for $x<0$ the large-time behavior of the solution is slowly decaying and is described by the Zakharov-Manakov type 
formula \cite{M}, where the power decay rate depends on  $\xi=\frac{x}{4t}$.
On the other hand, for  $x>0$ the solution converges to  constants $c=c(\xi)$, which can be described explicitly in terms of the spectral functions associated to the initial data. Notice that this asymptotic picture is in sharp contrast with  that in the case of the conventional 
(local) nonlinear Schr\"odinger equation 
\begin{equation}\label{NLS}
iq_{t}(x,t)+q_{xx}(x,t)+2q^{2}(x,t)\bar{q}(x,t)=0,
\end{equation}
where there are always three sectors with   different  asymptotic behavior: the Zakharov-Manakov (decaying) sector, the  plane wave sector, and the sector of modulated 
elliptic oscillations \cite{BKS}.

Since the NNLS equation is not translation invariant, the behavior of
the solutions of problem (\ref{1})-(\ref{2}) may depend significantly on 
the details of the shape of the initial data.
The present paper aims to rigorously demonstrate this effect taking the initial data
close to a ``shifted step'', i.e., the pure step function 
with the step located at $x=R$ with some $R>0$: $q_0(x)=0$ for $x<R$ and 
$q_0(x)=A$ for $x>R$.

Indeed, the assumption on the  winding of the argument of a certain 
(spectral) function associated with the initial data adopted in \cite{RS2}
is clearly violated for problem (\ref{1})-(\ref{2}), if the initial data 
has the form of the ``shifted pure step function'', with any $R>0$ provided $A$
is large enough (more precisely, for 
$R>\frac{\pi}{2A}$, see Proposition \ref{a_1ss} and (\ref{windingRH}) below). 
However, due to the presence of the discrete spectrum of the associated linear operator
(from the Lax pair associated with the NNLS equation), we are able 
to modify the transformations of the basic Riemann-Hilbert problem in such a way that the appropriate estimates can be established  in the present case (with
$R>\frac{\pi}{2A}$) as well. These transformations follow the ideas of 
the  nonlinear steepest decent method \cite{DZ} for studying large-time 
behavior of solutions of integrable nonlinear PDEs.

The article is organized as follows. In Section 2 we briefly present the formalism of the inverse scattering transform method, in the form of the associated Riemann--Hilbert 
factorization problem, developed in details in \cite{RS2}, and  discuss the properties of the spectral functions associated to the initial data (\ref{shifted-step}). The 
asymptotic analysis of the Riemann--Hilbert problem is then presented in Section 3,
where the main result of the paper is formulated.

\section{Inverse scattering transform and the Riemann-Hilbert problem}\label{ist}

\subsection{Direct scattering}

The focusing NNLS equation (\ref{1-a}) is a compatibility condition of  two linear differential equations (the  Lax pair) \cite{AMP}
\begin{equation}
\label{LP}
\left\{
\begin{array}{lcl}
\Phi_{x}+ik\sigma_{3}\Phi=U(x,t)\Phi\\
\Phi_{t}+2ik^{2}\sigma_{3}\Phi=V(x,t,k)\Phi\\
\end{array}
\right.
\end{equation}
where $\sigma_3=\left(\begin{smallmatrix} 1& 0\\ 0 & -1\end{smallmatrix}\right)$, $\Phi(x,t,k)$ is a $2\times2$ matrix-valued function, $k\in\mathbb{C}$ is an auxiliary (spectral) parameter, and the matrix coefficients $U(x,t)$ and $V(x,t,k)$ are given in terms of  
$q(x,t)$:	
\begin{equation}
U(x,t)=\begin{pmatrix}
0& q(x,t)\\
-\bar{q}(-x,t)& 0\\
\end{pmatrix},\qquad 
V=\begin{pmatrix}
V_{11}& V_{12}\\
V_{21}& V_{22}\\
\end{pmatrix},
\end{equation}
 where $V_{11}=-V_{22}=iq(x,t)\bar{q}(-x,t)$, $V_{12}=2kq(x,t)+iq_{x}(x,t)$, and
$V_{21}=-2k\bar{q}(-x,t)+i(\bar{q}(-x,t))_{x}$. 

Assuming that there exists $q(x,t)$ satisfying (\ref{1}) and (\ref{2}),
we define  the $2\times2$-valued functions $\Psi_j(x,t,k)$, $j=1,2$, $-\infty<x<\infty$, $0\leq t<\infty$ as the solutions of the linear Volterra integral equations \cite{RS2}:
\begin{subequations}\label{6}
\begin{align}
\label{6-1}
&\Psi_1(x,t,k)=N_-(k)+\int^x_{-\infty}G_-(x,y,t,k)\left(U(y,t)-U_-\right)\Psi_1(y,t,k)e^{ik(x-y)\sigma_3}\,dy,\,
k\in(\mathbb{C}^+,\mathbb{C}^-),\\
\label{6-2}
&\Psi_2(x,t,k)=N_+(k)-\int_x^{\infty}G_+(x,y,t,k)\left(U(y,t)-U_+\right)\Psi_2(y,t,k)e^{ik(x-y)\sigma_3}\,dy,\,
k\in(\mathbb{C}^-,\mathbb{C}^+),
\end{align}
\end{subequations}
where
$$
N_+(k)=
\begin{pmatrix}
1 & \frac{A}{2ik}\\
0 & 1
\end{pmatrix},\,
N_-(k)=
\begin{pmatrix}
1 & 0\\
\frac{A}{2ik} & 1
\end{pmatrix},\,
G_{\pm}(x,y,t,k)=\Phi_{\pm}(x,t,k)[\Phi_{\pm}(y,t,k)]^{-1},
$$ 
with
$
\Phi_{\pm}(x,t,k)=N_{\pm}(k)e^{-(ikx+2ik^2t)\sigma_3},
$
$
U_+=
\begin{pmatrix}
0 & A\\
0 & 0
\end{pmatrix},\,
U_-=
\begin{pmatrix}
0 & 0\\
-A & 0
\end{pmatrix}
$.
Here
$k\in(\mathbb{C}^+,\mathbb{C}^-)$, where $\mathbb{C}^{\pm}=\left\{k\in\mathbb{C}\,|\pm\Im k>0\right\}$, means that the first and the second column of a matrix can be analytically continued into respectively the upper and lower half-plane  as bounded functions.
Then $\Psi_j(x,t,k)e^{-(ikx+2ik^2t)\sigma_3}$, $j=1,2$ are the (Jost) solutions of 
the Lax pair (\ref{LP}) for all 
$k\in\mathbb{R}\setminus\{0\}$ and thus $\Psi_1$ and  $\Psi_2$ are related by
\begin{equation}
\label{9}
\Psi_1(x,t,k)=\Psi_2(x,t,k)e^{-(ikx+2ik^2t)\sigma_3}S(k)e^{(ikx+2ik^2t)\sigma_3},\,\,k\in\mathbb{R}\setminus\{0\},
\end{equation}
where $S(k)$ is the so-called scattering matrix; it can be obtained in terms of the initial data only, evaluating (\ref{6}) at $t=0$: $S(k)=\Psi_2^{-1}(0,0,k)\Psi_1(0,0,k)$.

Since the matrix $U(x,t)$ satisfies the symmetries 
$
\Lambda
\overline{U(-x,t)}
\Lambda^{-1}=U(x,t)
$
with $\Lambda=\left(\begin{smallmatrix} 0& 1\\ 1 & 0\end{smallmatrix}\right)$, it follows that 
 $\Lambda
\overline{\Psi_1(-x,t,-k)}
\Lambda^{-1}=\Psi_2(x,t,k)$ for $k\in\mathbb{R}\setminus\{0\}$. In turn, this implies that the scattering matrix $S(k)$ can be written as
\begin{equation}
S(k)=
\begin{pmatrix}
a_1(k)& -\overline{b(-k)}\\
b(k)& a_2(k)
\end{pmatrix},\qquad k\in\mathbb{R}\setminus\{0\},
\end{equation}
with some $b(k)$, $a_1(k)$, and $a_2(k)$ such that 
$\overline{a_{j}(-k)}=a_j(k)$, $j=1,2$. Moreover, $a_1(k)$, and $a_2(k)$
have analytic continuations into the upper and lower half-planes respectively.
We summarize the properties of the spectral functions in the following proposition
($\overline{\mathbb{C}^{\pm}}=\left\{k\in\mathbb{C}\,|\pm\Im k\geq0\right\}$) \cite{RS2}:
\begin{proposition}\label{properties}
The spectral functions $a_j(k)$, j=1,2, and $b(k)$ have the following properties
\begin{enumerate}
\item 
$a_{1}(k)$ is analytic in  $k\in\mathbb{C}^{+}$
and continuous in 
$\overline{\mathbb{C}^{+}}\setminus\{0\}$;
$a_{2}(k)$ is analytic in $k\in\mathbb{C}^{-}$
and continuous in 
$\overline{\mathbb{C}^{-}}$. 
\item
$a_{j}(k)=1+{O}\left(\frac{1}{k}\right)$, $j=1,2$ and 
$b(k)={O}\left(\frac{1}{k}\right)$ as $k\rightarrow\infty$ (the latter holds
for $k\in{\mathbb R}$).
\item
$\overline{a_{1}(-\bar{k})}=a_1(k)$,  
$k\in\overline{\mathbb{C}^{+}}\setminus\{0\}$; \qquad
$\overline{a_{2}(-\bar{k})}=a_2(k)$,  
$k\in\overline{\mathbb{C}^{-}}$.
\item
$a_{1}(k)a_{2}(k)+b(k)\overline{b(-
\bar{k})}=1$, $k\in{\mathbb R}\setminus\{0\}$ (follows from $\det S(k)=1$).
\item As $k\to 0$, $a_1(k)=\frac{A^2a_2(0)}{4k^2}+O\left(\frac{1}{k}\right)$ and 
$b(k)=\frac{Aa_2(0)}{2ik}+O\left(1\right)$.
\end{enumerate}
\end{proposition}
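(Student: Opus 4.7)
The plan is to derive all five properties from the formula $S(k)=\Psi_2^{-1}(0,0,k)\Psi_1(0,0,k)$ together with the Volterra integral equations (\ref{6-1})-(\ref{6-2}) evaluated at $t=0$. First I would establish existence and the analytic structure of $\Psi_1(x,0,k)$ and $\Psi_2(x,0,k)$ by Neumann iteration of the Volterra equations, using the assumed sufficiently fast decay of $q_0(x)$ as $x\to-\infty$ and of $q_0(x)-A$ as $x\to+\infty$ together with the explicit form of the kernels $G_\pm$. In particular, the exponential factor $e^{ik(x-y)\sigma_3}$ in (\ref{6-1}) is bounded in $y\le x$ for the first column when $k\in\overline{\mathbb{C}^+}$ and for the second column when $k\in\overline{\mathbb{C}^-}$, which gives the $(\mathbb{C}^+,\mathbb{C}^-)$ analyticity of the columns of $\Psi_1$; an analogous analysis gives the $(\mathbb{C}^-,\mathbb{C}^+)$ analyticity of the columns of $\Psi_2$.

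For (1) and (2): reading off the entries of $S(k)=\Psi_2^{-1}(0,0,k)\Psi_1(0,0,k)$ and using the structure of $N_\pm^{-1}$, one sees that $a_1(k)$ is formed from columns with upper half-plane analyticity while $a_2(k)$ is formed from lower half-plane analytic columns, which gives (1) together with the continuity up to the real axis (excluding $k=0$, where $N_\pm$ is singular). The asymptotic behaviour in (2) follows by dividing out the large-$k$ limits $N_\pm(k)\to I$ and estimating the first iterate of the Volterra series; the Riemann--Lebesgue type argument (or a single integration by parts using the oscillatory factor $e^{ik(x-y)\sigma_3}$) produces the $O(1/k)$ remainders.

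Property (3) follows from the involution $\Lambda\overline{U(-x,t)}\Lambda^{-1}=U(x,t)$ already recalled in the text: I would verify that $\Lambda\overline{\Psi_1(-x,0,-\bar k)}\Lambda^{-1}$ satisfies the same Volterra equation as $\Psi_2(x,0,k)$ (with $N_-$ mapped to $N_+$ under $k\mapsto -\bar k$ together with conjugation), and then by the uniqueness of the Volterra solution these two are equal; substituting into $S=\Psi_2^{-1}\Psi_1$ yields $\overline{a_j(-\bar k)}=a_j(k)$ and the off-diagonal form $S_{12}(k)=-\overline{b(-k)}$. Property (4) is immediate: take determinants of (\ref{9}) and use $\det N_\pm(k)=1$, so $\det\Psi_j(x,0,k)\equiv 1$ and therefore $\det S(k)=1$ on $\mathbb{R}\setminus\{0\}$.

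The main technical obstacle is (5), the behaviour at $k=0$, because the normalizations $N_\pm(k)$ themselves have simple poles at $k=0$ so that the $\Psi_j(0,0,k)$ inherit $1/k$ singularities which interact multiplicatively when one forms $S=\Psi_2^{-1}\Psi_1$. I would expand the Volterra solutions in a Laurent series,
\begin{equation*}
\Psi_j(x,0,k)=\frac{1}{k}\Psi_j^{(-1)}(x)+\Psi_j^{(0)}(x)+O(k),\qquad k\to 0,
\end{equation*}
plug these into the Volterra equations to obtain recursive ODE/integral relations for the coefficient matrices $\Psi_j^{(-1)}$ and $\Psi_j^{(0)}$, and then carefully multiply out $\Psi_2^{-1}(0,0,k)\Psi_1(0,0,k)$. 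The claimed leading orders $a_1(k)=\frac{A^2 a_2(0)}{4k^2}+O(k^{-1})$ and $b(k)=\frac{A a_2(0)}{2ik}+O(1)$ will then come from the product of the leading singular parts of $N_\pm(k)$, with the coefficient $a_2(0)$ identified by evaluating the appropriate entry of $\Psi_1(0,0,0)$ via its Volterra representation. The bookkeeping of cancellations among the several $1/k$ singularities, and verifying that the coefficient of $k^{-3}$ in $a_1$ is actually zero so that only the $k^{-2}$ term survives, is the delicate part of the argument.
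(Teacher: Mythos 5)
Your proposal is correct and is essentially the paper's own route --- the paper defers the proof of this proposition to \cite{RS2} and, for item 5, to the relations (\ref{k0}) of Remark \ref{remsing}, which are exactly what your Laurent--coefficient recursion in the Volterra equations produces (the residue at $k=0$ of the first column of $\Psi_1$ equals $\frac{A}{2i}$ times the value of its second column there, and likewise for $\Psi_2$, because the two columns of the residue equation and of the $O(1)$ equation satisfy the same Volterra equation with proportional inhomogeneous terms coming from $N_\pm$), after which the stated coefficients follow from $a_1=\det\bigl(\Psi_1^{(1)},\Psi_2^{(2)}\bigr)$ and $a_2=\det\bigl(\Psi_2^{(1)},\Psi_1^{(2)}\bigr)$. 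The only inaccuracy is harmless: since $\Psi_1(0,0,k)$ and $\Psi_2^{-1}(0,0,k)$ each have at most a simple pole at $k=0$, a $k^{-3}$ term in $a_1$ cannot arise in the first place, so the ``delicate cancellation'' you flag at that order is automatic.
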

\begin{remark}\label{remsing}
Item 5 of Proposition \ref{properties} follows from the behavior of $\Psi_j(x,t,k)$ as $k\to0$, which has an additional symmetry \cite{RS2}:
\begin{subequations}
\label{k0}
\begin{align}
\label{k0-a}
&\Psi_1^{(1)}(x,t,k)=\frac{1}{k}
\begin{pmatrix}v_1(x,t)\\v_2(x,t)\end{pmatrix}+O(1),
&\Psi_1^{(2)}(x,t,k)=\frac{2i}{A}\begin{pmatrix}v_1(x,t)\\ v_2(x,t)\end{pmatrix}+O(k),\\
\label{k0-b}
&\Psi_2^{(1)}(x,t,k)=-\frac{2i}{A}
\begin{pmatrix}
\overline{v_2}(-x,t)\\ \overline{v_1}(-x,t)
\end{pmatrix}+O(k),
&\Psi_2^{(2)}(x,t,k)=
-\frac{1}{k}
\begin{pmatrix}\overline{v_2}(-x,t)\\\overline{v_1}(-x,t)
\end{pmatrix}+O(1),
\end{align}
\end{subequations}
with some  $v_j(x,t)$, $j=1,2$, where $\Psi_j^{(i)}(x,t,k)$ denotes the i-th column of 
$\Psi_j(x,t,k)$.
\end{remark}

\subsection{Spectral functions for the ``shifted step'' initial data}
In the case of pure ``shifted step'' initial data
\begin{equation}
\label{shifted-step}
q_0(x)=
\begin{cases}
0,\quad x<R,\\
A,\quad x>R,
\end{cases}
\end{equation}
the associated spectral functions  
can be calculated explicitly:
\begin{subequations}
\label{spf}
\begin{align}
&a_1(k)=1+\frac{A^2}{4k^2}e^{4ikR},\\
&a_2(k)=1,\\
&b(k)=\frac{A}{2ik}e^{2ikR}.
\end{align}
\end{subequations}
Indeed, evaluating (\ref{9}) for $x=-R$ and $t=0$ it follows that the scattering matrix 
$S(k)$ can be determined by
\begin{equation}\label{scatt-st}
S(k)=e^{-ikR\sigma_3}\Psi_2^{-1}(-R,0,k)\Psi_1(-R,0,k)e^{ikR\sigma_3}.
\end{equation}
Taking into account (\ref{shifted-step}),  from (\ref{6}) for $t=0$ we have
\begin{subequations}
\label{psiR}
\begin{align}
&\Psi_1(-R,0,k)=N_-(k),\\
&\Psi_2(-R,0,k)=N_+(k)-\int^R_{-R}G_+(-R,y,0,k)
\begin{pmatrix}
0& -A\\
0& 0
\end{pmatrix}
\Psi_2(y,0,k) e^{-ik(R+y)\sigma_3}\,dy,
\end{align}
\end{subequations}
where $\Psi_2(x,0,k)$ for $x\in\left[-R,R\right]$ solves the  integral equation
\begin{equation}\label{int-psi2}
\Psi_2(x,0,k)=N_+(k)-\int^R_x G_+(x,y,0,k)
\begin{pmatrix}
0& -A\\
0& 0
\end{pmatrix}
\Psi_2(y,0,k)
e^{ik(x-y)\sigma_3}\,dy,\quad x\in[-R, R].
\end{equation}
Direct calculations show that
\begin{equation*}
G_+(x,y,0,k)=
\begin{pmatrix}
e^{-ik(x-y)}& \frac{A}{2ik}\left(e^{ik(x-y)}-e^{-ik(x-y)}\right)\\
0& e^{ik(x-y)}
\end{pmatrix},
\end{equation*}
and, therefore, the solution  $\Psi_2(x,0,k)$ of (\ref{int-psi2})
is given by  
\begin{equation}\label{psi2R}
\Psi_2(x,0,k)=
\begin{pmatrix}
1 & \frac{A}{2ik}e^{2ik(R-x)}\\
0 & 1
\end{pmatrix},\quad x\in\left[-R,R\right].
\end{equation}
Substituting (\ref{psiR}) and (\ref{psi2R}) into (\ref{scatt-st}) one obtains (\ref{spf}).

The locations  of zeros of $a_1(k)$ in $\overline{{\mathbb{C}^{+}}}$, which
 clearly depend on $A$ and $R$,
and the behavior of the argument of $a_1(k)$ for  $k\in \mathbb R$
are described in the following

\begin{proposition}\label{a_1ss}

\begin{enumerate}[(i)]
\item For $0<R<\frac{\pi}{2A}$,   $a_1(k)$ 
has one simple zero in $\overline{\mathbb{C}^{+}}$ at  $k=ik_0$, $k_0>0$, where $k_0$ is the unique solution of the  transcendental equation
\begin{equation}
\label{transc}
k=\frac{A}{2}e^{-2kR},\quad k\in\mathbb{R}.
\end{equation}
Moreover, for all $\xi>0$,
\begin{equation}\label{winding0}
\int_{-\infty}^{-\xi}d\arg a_1(k)\in(-\pi,\pi).
\end{equation}
\item For $\frac{(2n-1)\pi}{2A}<R<\frac{(2n+1)\pi}{2A}$, $n\in\mathbb{N}$, 
 $a_1(k)$ has the following properties:
\begin{itemize}
	\item 
	$a_1(k)$ has $2n+1$ simple zeros in $\overline{\mathbb{C}^{+}}$:
$\{ik_0;\{p_j, -\overline{p}_j\}_{j=1}^{n}\}$. Here $k_0>0$ 
 is the solution of (\ref{transc}),
 $\{\Re p_j\}_{j=1}^{n}$ are the ordered set of solutions of 
 equation 
\begin{equation}
\label{transc2}
k=\pm\frac{A}{2}\sin (2kR) e^{2kR\cot (2kR)},
\end{equation}
considered for $k<0$
(see also Figure \ref{zer}),
and 
\begin{equation}\label{Imp_j}
\Im p_j=-\Re p_j\cot (2\Re p_jR),\quad j=\overline{1,n}.
\end{equation}
Notice that 
\begin{equation}
\label{interv}
\Re p_j\in\left(-\frac{j\pi}{2R},-\frac{(2j-1)\pi}{4R}\right),
\quad j=\overline{1,n}.
\end{equation}
\item
Let $\omega_0=0$, $\omega_j=\frac{(2j-1)\pi}{4R}$ for $j=\overline{1,n}$, and 
$\omega_{n+1}=\infty$. Then  
\begin{subequations}\label{winding}
\begin{align}
&\int_{-\infty}^{-\omega_{n-j+1}}d\arg a_1(k) = (2j-1)\pi,\quad j=\overline{1,n},\\
&\int_{-\infty}^{-\xi}d\arg a_1(k)\in((2j-1)\pi,(2j+1)\pi),\quad -\omega_{n-j+1}<-\xi<-\omega_{n-j},\quad j=\overline{0,n}.
\end{align}
\end{subequations}

\end{itemize}

\item If $R=\frac{(2n+1)\pi}{2A}$ for some $n\in\mathbb{N}\cup\{0\}$, then  
$a_1(k)$ has $2n+3$ simple zeros in $\overline{\mathbb{C}^{+}}$ at 
$\{\pm\frac{A}{2}$, $ik_0$,  $\{p_j, -\overline{p}_j\}_{j=1}^{n}\}$, where $k_0>0$ is the  solution of  (\ref{transc}),   $\Re p_j$ ($j=\overline{1,n}$) are the solutions of (\ref{transc2}), and $\Im p_j$ are determined by 
 (\ref{Imp_j}).
\end{enumerate}
\begin{figure}[h]
\label{zer}
\begin{minipage}[h]{0.49\linewidth}
\centering{\includegraphics[width=0.99\linewidth]{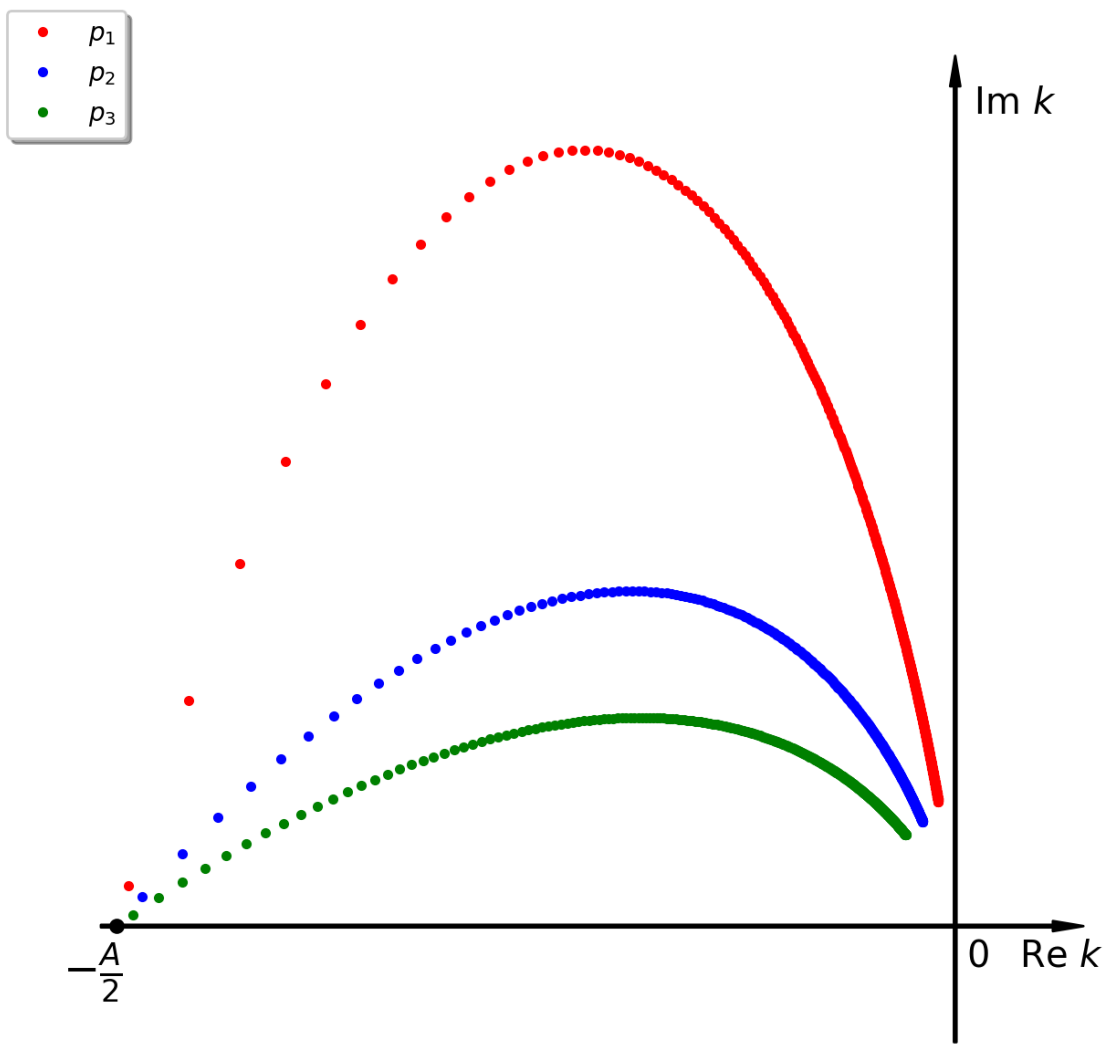}}
\end{minipage}
\hfill
\begin{minipage}[h]{0.49\linewidth}
\centering{\includegraphics[width=0.79\linewidth]{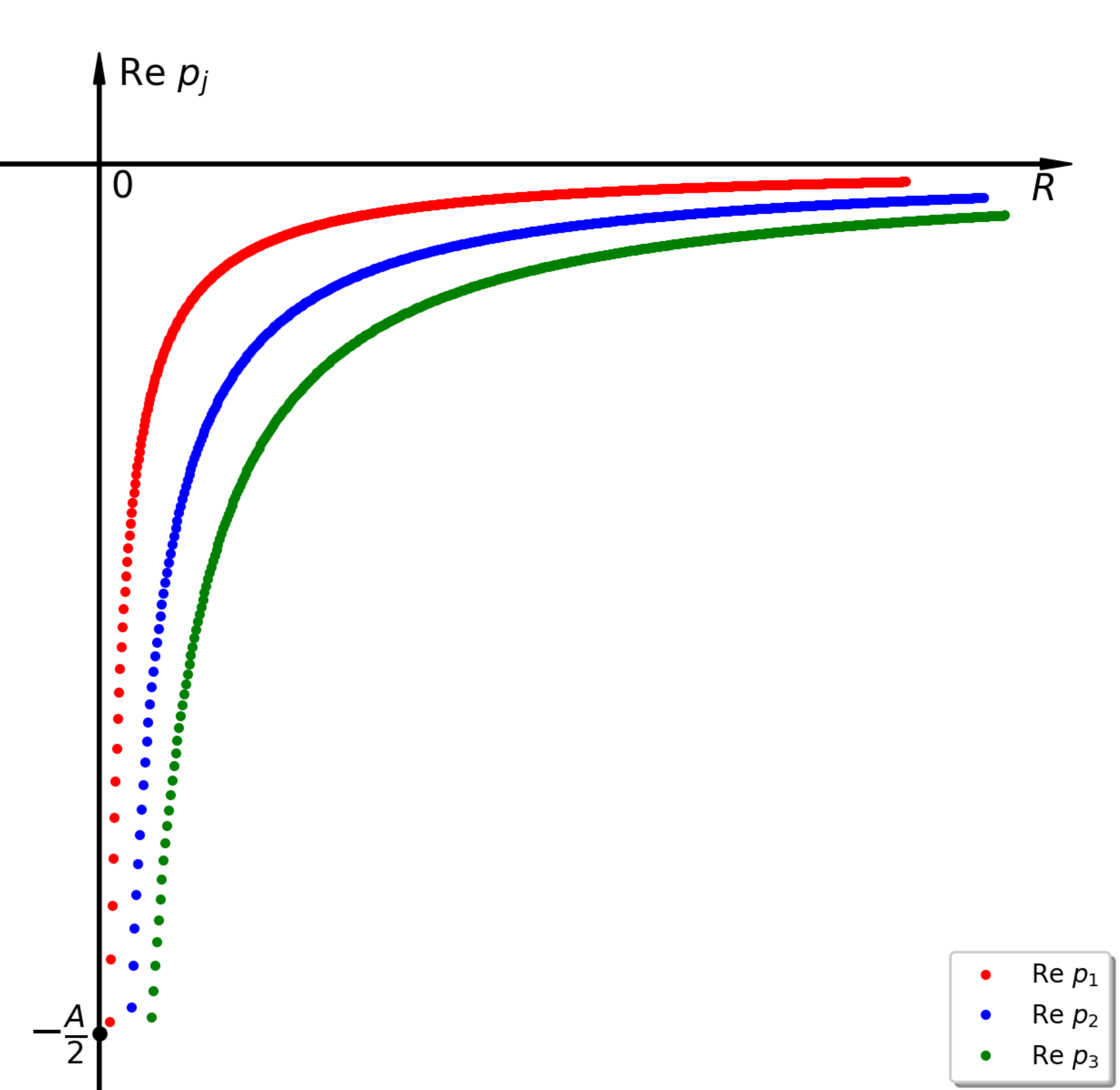}}
\vfill
\centering{\includegraphics[width=0.79\linewidth]{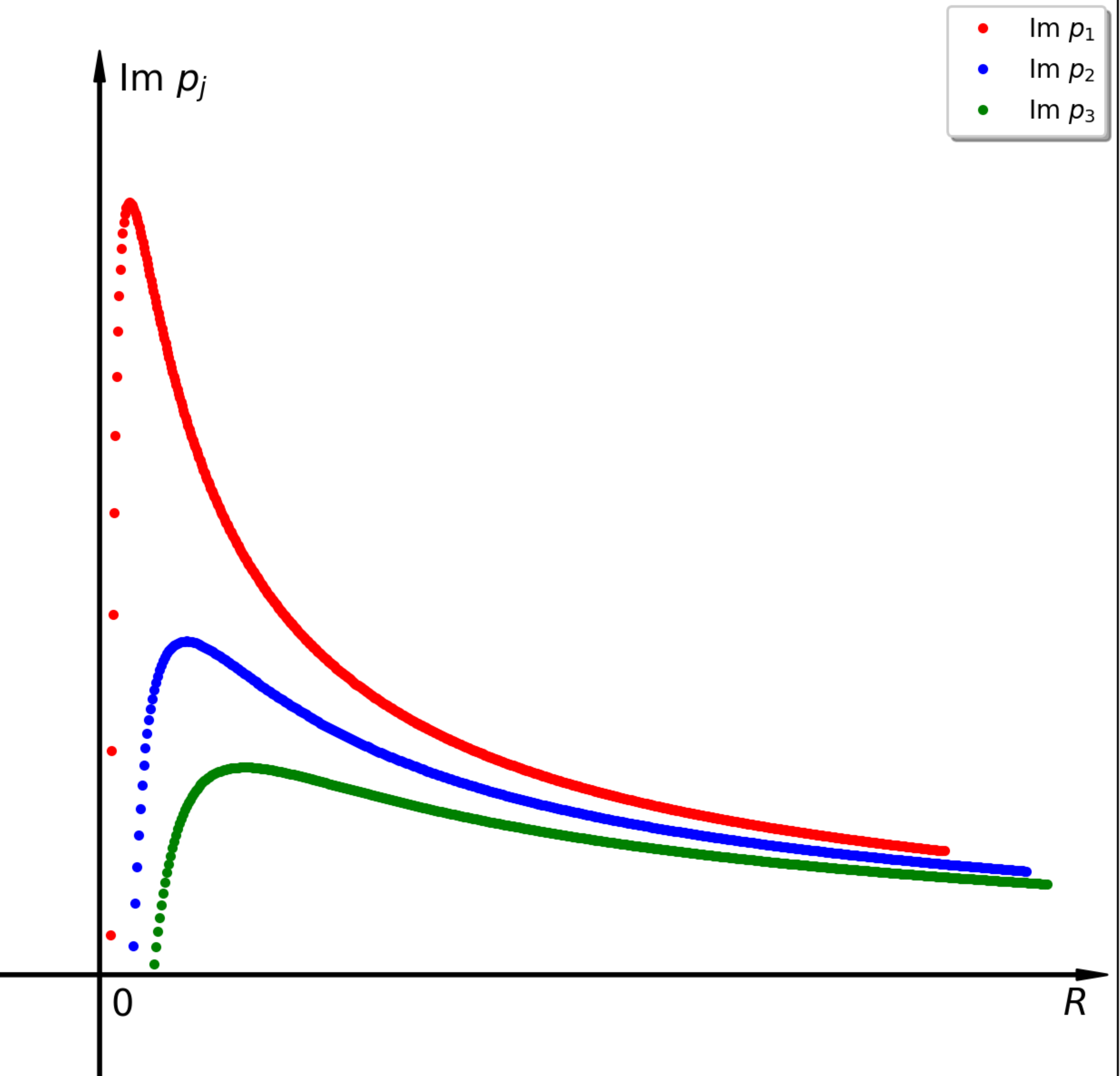}}
\end{minipage}
\caption{``Evolution'' of the zeros $p_j$, $j=1,2,3$, as $R\to\infty$.}
\end{figure}
\end{proposition}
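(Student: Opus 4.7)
The plan is to work directly with the explicit formula $a_1(k) = 1 + \frac{A^2}{4k^2}\,e^{4ikR}$. The equation $a_1(k) = 0$ is equivalent to $2k = \pm iA\,e^{2ikR}$; writing $k = \alpha + i\beta$ and splitting into real and imaginary parts gives the coupled system
\[
\alpha = \mp \tfrac{A}{2}e^{-2\beta R}\sin(2\alpha R),\qquad \beta = \pm \tfrac{A}{2}e^{-2\beta R}\cos(2\alpha R).
\]
On the imaginary axis ($\alpha = 0$), only the upper signs admit $\beta > 0$, reducing to (\ref{transc}); strict monotonicity of its two sides yields a unique positive solution $k_0$, producing the zero $ik_0$. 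For $\alpha\neq 0$, dividing the two equations eliminates the exponential and gives (\ref{Imp_j}); resubstitution using $1+\cot^2 = 1/\sin^2$ yields (\ref{transc2}). The constraint $\beta>0$ combined with $\beta = -\alpha\cot(2\alpha R)$ forces $\alpha\cot(2\alpha R)<0$, and the $\pi$-periodicity of $\cot$ places $\alpha<0$ exactly in the intervals (\ref{interv}).

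On each interval (\ref{interv}) for $j=1,\ldots,n$ I would run an endpoint analysis of the right-hand side of (\ref{transc2})---it vanishes at the left endpoint (where $\sin(2\alpha R)=0$ and $2\alpha R\cot(2\alpha R)\to-\infty$) and has magnitude $A/2$ at the right endpoint (where $\cos(2\alpha R)=0$ and $2\alpha R\cot(2\alpha R)\to 0$)---and combine this with the intermediate value theorem and a monotonicity check to produce exactly one solution $\Re p_j$ per interval; the reflection symmetry $\overline{a_1(-\bar k)} = a_1(k)$ then gives the paired zero $-\bar p_j$, with $\Im p_j$ fixed by (\ref{Imp_j}). For case (i) the magnitude $A/2$ fails to exceed $|\alpha|$ at the right endpoint when $R<\pi/(2A)$, so no complex zero arises and only $ik_0$ remains. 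For case (iii) with $R=(2n+1)\pi/(2A)$, direct substitution gives $a_1(\pm A/2)=1+1\cdot(-1)=0$, and $a_1'(\pm A/2)\neq 0$ confirms simplicity; these correspond to the boundary case $\cos(2\alpha R)=0$ excluded from the interior analysis.

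For the winding, observe that on $\mathbb{R}\setminus\{0\}$, $\Im a_1(k) = \frac{A^2}{4k^2}\sin(4kR)$ vanishes exactly at $k=m\pi/(4R)$, $m\in\mathbb{Z}\setminus\{0\}$, where $a_1(k)=1+(-1)^m A^2/(4k^2)$; this real value is negative precisely at $k=\pm\omega_j$ for those $j$ with $R>(2j-1)\pi/(2A)$, i.e.\ for $j=1,\ldots,n$ in case (ii) and for no $j$ in case (i). Between consecutive such points, $\sin(4kR)$ has constant sign, so $a_1$ stays in a single open half-plane. Tracking $\arg a_1(k)$ continuously on $(0,\infty)$ from the initial value $\arg a_1(0^+)=0$ (obtained from $a_1(k)\sim (A^2/(4k^2))(1+4ikR+O(k^2))$), each subinterval increments the argument by $+\pi$ so long as consecutive real endpoints sit on opposite halves of the real axis; this gives $\arg a_1(\omega_j) = (2j-1)\pi$ for $j\leq n$ and $\arg a_1(+\infty)=2n\pi$, while $\arg a_1$ merely oscillates around $2n\pi$ beyond $\omega_n$ since $a_1$ never reaches $(-\infty,0)$ there. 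The involution $a_1(-k)=\overline{a_1(k)}$ for real $k$ gives the continuous-tracking identity $\arg a_1(-k) = 2n\pi - \arg a_1(k)$, pinned by the matching $\arg a_1(\pm\infty)\equiv 0\pmod{2\pi}$; this immediately yields $\arg a_1(-\omega_{n-j+1})=(2j-1)\pi$, which is the first equation of (\ref{winding}). The strict inclusions in the second equation of (\ref{winding}) and in (\ref{winding0}) then follow because $a_1$ does not touch the negative real axis inside the corresponding open intervals.

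The main technical obstacle is the endpoint and monotonicity analysis of (\ref{transc2}) on each interval (\ref{interv}): the parity of $j$ and the sign choice ``$\pm$'' interact with the signs of $\sin(2\alpha R)$ and $\cos(2\alpha R)$, and careful bookkeeping is needed to guarantee exactly one zero per interval. Once the zero count is pinned down, the winding part reduces to a routine continuous-tracking argument based on the half-plane trapping of $a_1$ between consecutive real points.
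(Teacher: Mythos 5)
Your proposal is correct and follows essentially the same route as the paper: reduce $a_1(k)=0$ to the coupled real--imaginary system, isolate the purely imaginary zero via (\ref{transc}), divide the equations to get (\ref{Imp_j}) and resubstitute to get (\ref{transc2}) with the interval localization (\ref{interv}), count one root per interval by an endpoint/monotonicity argument, and obtain the winding by checking the sign of $a_1$ at the real points $k=(1-2m)\pi/(4R)$ where it crosses the real axis. The only cosmetic differences are that you work directly with $\Re k<0$ rather than with $\Re k>0$ plus the symmetry $a_1(k)=\overline{a_1(-\bar k)}$, and you spell out the half-plane-trapping argument that the paper compresses into the inequalities $\frac{A^2}{4k_{(m)}^2}e^{4ik_{(m)}R}\gtrless -1$.
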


\begin{proof}

 Observe that the equation $a_1(k)=0$ is equivalent to the system
\begin{equation}
\label{syst}
\begin{cases}
k_1=\mp\frac{A}{2}\sin(2k_1R)e^{-2k_2R}\\
k_2=\pm\frac{A}{2}\cos(2k_1R)e^{-2k_2R}
\end{cases},
\end{equation}
where $k=k_1+ik_2$, $k\in\overline{\mathbb{C}^{+}}\setminus\{0\}$. Due to the symmetry relation $a_1(k)=\overline{a_1(-\bar{k})}$ it is sufficient to consider (\ref{syst}) 
for $k_1\geq0$ only.

(i) Assuming $k_1=0$, the system (\ref{syst}) reduces to the equations 
$k_2=\pm\frac{A}{2}e^{-2k_2R}$ and thus $a_1(k)$ has exactly one purely imaginary 
simple zero (with $k_2>0$) for all $R>0$ and $A>0$,  and its imaginary part is the solution of  (\ref{transc}).

(ii)  Assuming $k_2=0$, the second equation in (\ref{syst}) implies that 
 $k_1$ must be equal to $\frac{\pi+2\pi n}{4R}$ with some $n\in\mathbb{N}\cup\{0\}$.
But then,  from the first equation in (\ref{syst}) we conclude that $k_1=\frac{A}{2}$. 
Therefore, $k=\frac{A}{2}$ is a simple zero of $a_1(k)$ if and only if there exists $n\in\mathbb{N}\cup\{0\}$ such that $\pi+2\pi n=2AR$.

(iii) Now, let's look at the location of  zeros of $a_1(k)$ in the open quarter plane $k_1>0$, $k_2>0$. Dividing the  equations in (\ref{syst}) sidewise we arrive at  (cf. (\ref{Imp_j}))
\begin{equation}\label{cot}
k_2=-k_1\cot(2k_1R),\quad k_1\neq\frac{n\pi}{4R},\,
n\in\mathbb{N},
\end{equation}
from which  we conclude (cf. (\ref{interv})) that 
\begin{equation}\label{domain}
k_1\in\left(\frac{(2n-1)\pi}{4R},\frac{n\pi}{2R}\right),\quad n\in\mathbb{N}.
\end{equation}
Substituting (\ref{cot}) into the first equation in (\ref{syst}) and taking into account the sign of $\sin(2k_1R)$ for $k_1$ satisfying (\ref{domain}), we obtain an equation 
for $k_1$ in the form 
\begin{subequations}\label{k_1}
\begin{equation}\label{k1-1}
k_1=\frac{A}{2}\sin(2k_1R)e^{2k_1R\cot(2k_1R)}\quad \text{for}\ 
k_1\in\left(\frac{(4n-3)\pi}{4R},\frac{(2n-1)\pi}{2R}\right),\quad n\in\mathbb{N},
\end{equation}
or 
\begin{equation}\label{k1-2}
k_1=-\frac{A}{2}\sin(2k_1R)e^{2k_1R\cot(2k_1R)} \quad \text{for}\ 
 k_1\in\left(\frac{(4n-1)\pi}{4R},\frac{n\pi}{R}\right),\quad n\in\mathbb{N}.
\end{equation}
\end{subequations}
Since the r.h.s. of (\ref{k1-1}) and (\ref{k1-2}) monotonically decrease in
the  corresponding intervals for $k_1$, it follows that equations (\ref{k_1}) have no solutions for $0<R\leq\frac{\pi}{2A}$, whereas  for 
$\frac{(2n-1)\pi}{2A}<R\leq\frac{(2n+1)\pi}{2A}$ equations (\ref{k_1}) 
have  $n$ simple solutions $\{k_{1,j}\}_{j=1}^{n}$ such that  $k_{1,j}\in\left(\frac{(2j-1)\pi}{4R},\frac{j\pi}{2R}\right)$, 
$j=\overline{1,n}$ (cf.  (\ref{interv})).

Concerning the   winding properties of $\arg a_1(k)$, (\ref{winding0}) 
(for $0<R<\frac{\pi}{2A}$) follows from the inequality
\begin{equation}
\frac{A^2}{4k_{(m)}^{2}}e^{4ik_{(m)}R}>-1,\quad \text{where}\ 
k_{(m)}=\frac{(1-2m)\pi}{4R},\quad m\in\mathbb{N},
\end{equation}
whereas (\ref{winding}) (for $\frac{(2n-1)\pi}{2A}<R<\frac{(2n+1)\pi}{2A}$) follows from
\begin{subequations}
\begin{align}
&\frac{A^2}{4k_{(m)}^2}e^{4ik_{(m)}R}>-1\quad \text{for}\ 
k_{(m)}=\frac{(1-2m)\pi}{4R},\quad
m\in\mathbb{N},\quad
m>n,\\
&\frac{A^2}{4k_{(m)}^2}e^{4ik_{(m)}R}<-1 \quad \text{for}\ 
k_{(m)}=\frac{(1-2m)\pi}{4R},\quad
m\in\mathbb{N},\quad
m\leq n.
\end{align}
\end{subequations}
\end{proof}
\subsection{The basic Riemann-Hilbert problem and inverse scattering}
\label{bRH}

The Riemann--Hilbert formalism of the Inverse Scattering Transform method
 is based on constructing a piece-wise meromorphic,
$2\times 2$-valued function in the $k$-complex plane, which has the prescribed jump across a some contour in the complex plane and prescribed conditions at singular points (in case they are present).

The analytic properties of the Jost solutions $\Psi_j$ suggest 
defining the $2\times 2$-valued function $M(x,t,k)$, piece-wise meromorphic relative to 
$\mathbb R$, as follows \cite{RS2}:

\begin{equation}
\label{DM}
M(x,t,k)=
\left\{
\begin{array}{lcl}
\left(\frac{\Psi_1^{(1)}(x,t,k)}{a_{1}(k)},\Psi_2^{(2)}(x,t,k)\right),\quad k\in\mathbb{C}^+\setminus\{0\},\\
\left(\Psi_2^{(1)}(x,t,k),\frac{\Psi_1^{(2)}(x,t,k)}{a_{2}(k)}\right),\quad k\in\mathbb{C}^-\setminus\{0\}.\\
\end{array}
\right.
\end{equation}
Then the scattering relation (\ref{9}) implies that the boundary values 
$M_\pm(x,t,k) = \underset{k'\to k, k'\in {\mathbb C}^\pm}{\lim} M(x,t,k')$, $k\in\mathbb R$
 satisfy the  multiplicative jump condition
\begin{equation}\label{jr}
M_+(x,t,k)=M_-(x,t,k)J(x,t,k),\qquad k\in\mathbb{R}\setminus\{0\},
\end{equation}
where
\begin{equation}\label{jump}
J(x,t,k)=
\begin{pmatrix}
1+r_{1}(k)r_{2}(k)& r_{2}(k)e^{-2ikx-4ik^2t}\\
r_1(k)e^{2ikx+4ik^2t}& 1
\end{pmatrix},
\end{equation}
with the reflection coefficients defined by
\begin{equation}\label{r12}
r_1(k):=\frac{b(k)}{a_1(k)},\quad r_2(k):=\frac{\overline{b(-k)}}{a_2(k)}.
\end{equation}
Observe that by the determinant relation (see item 4 in Proposition \ref{properties}) we have
\begin{equation}\label{scalrel}
1+r_1(k)r_2(k)=\frac{1}{a_1(k)a_2(k)}.
\end{equation}
Moreover, 
\begin{equation}
M(x,t,k)\to I,\qquad k\to\infty,
\end{equation}
where  $I$ is the $2\times2$ identity matrix.

Taking into account the singularities of 
$\Psi_j(x,t,k)$, $j=1,2$ and  $a_1(k)$  at  $k=0$ (see Proposition \ref{properties} and Remark \ref{remsing}), the behavior of $M(x,t,k)$ at $k=0$ can be described as follows:
\begin{subequations}\label{z}
	\begin{align}
	\label{+i0}
	& M_+(x,t,k)=
	\begin{pmatrix}
	\frac{4}{A^2a_2(0)}v_1(x,t)& -\overline{v_2}(-x,t)\\
	\frac{4}{A^2a_2(0)}v_2(x,t)& -\overline{v_1}(-x,t)
	\end{pmatrix}
	(I+O(k))
	\begin{pmatrix}
	k& 0\\
	0& \frac{1}{k}
	\end{pmatrix}
	,\quad k\rightarrow +i0,\\
	\label{-i0}
	& M_-(x,t,k)=\frac{2i}{A}
	\begin{pmatrix}
	-\overline{v_2}(-x,t)& \frac{v_1(x,t)}{a_2(0)}\\
	-\overline{v_1}(-x,t)& \frac{v_2(x,t)}{a_2(0)}
	\end{pmatrix}
	+O(k)
	,\quad k\rightarrow -i0,
	\end{align}
\end{subequations}

Now, 
being motivated by the properties of $a_1(k)$ and $a_2(k)$ in the case of ``shifted step'' initial data
(see Proposition \ref{a_1ss}), we make 
 the following additional assumptions on $a_1(k)$ and $a_2(k)$ in the case of
general step-like initial data satisfying (\ref{shifted-step-gen}):
\begin{description}
\item [Assumptions A:] 
\item{\textbf{({a-1})}} 
$a_1(k)$ has $2n+1$, $n\in\mathbb{N}\cup\{0\}$, simple zeros 
in $\overline{{\mathbb C}^+}\setminus\{0\}$
at $k=ik_0$ with $k_0>0$, at $\{p_j\}_{j=1}^{n}$ and at $\{-\overline{p}_j\}_{j=1}^{n}$ with $\Im p_j>0$ and 
$\Re p_n<\dots<\Re p_1<0$. 
\item{\textbf{({a-2})}}
$a_2(k)$ has no zeros in $\overline{\mathbb{C}^-}$.
\item{\textbf{(b)}} 
There are $\omega_m>0$, $m=\overline{0,n+1}$, such that
\begin{equation}\label{order}
-\infty=-\omega_{n+1}<\Re p_n<-\omega_n<\Re p_{n-1}<-\omega_{n-1}<\dots<\Re p_1<-\omega_1<\omega_0=0,
\end{equation}
\begin{subequations}\label{windingRH}
\begin{equation}
\int_{-\infty}^{-\omega_{n-m+1}}d\arg\left(a_1(k)a_2(k)\right) = (2m-1)\pi,
	\quad m=\overline{1,n},
\end{equation}
and 
\begin{equation}
\int_{-\infty}^{-\xi}d\arg\left(a_1(k)a_2(k)\right)
\in((2m-1)\pi,(2m+1)\pi),\quad -\omega_{n-m+1}<-\xi<-\omega_{n-m},\quad m=\overline{0,n}.
\end{equation}
\end{subequations}
\end{description}

In accordance with this assumption, $M(x,t,k)$ satisfies the residue conditions:
\begin{subequations}\label{resin}
\begin{align}
\underset{k=ik_0}{\operatorname{Res}} M^{(1)}(x,t,k)&=
\frac{\gamma_0}{\dot{a}_1(ik_0)}e^{-2k_0x-4ik_0^2t}M^{(2)}(x,t,ik_0),\quad
|\gamma_0|=1,\\
\underset{k=p_j}{\operatorname{Res}} M^{(1)}(x,t,k)&=
\frac{\eta_j}{\dot a_1(p_j)}
e^{2ip_jx+4ip_j^2t}M^{(2)}(x,t,p_j),\quad j=\overline{1,n},\\
\underset{k=-\overline{p}_j}{\operatorname{Res}} M^{(1)}(x,t,k)&=
\frac{1}{\bar\eta_j\dot a_1(-\overline{p}_j)}
e^{-2i\overline{p}_jx+4i\overline{p}_j^2t}M^{(2)}(x,t,-\overline{p}_j),
\quad j=\overline{1,n},
\end{align}
\end{subequations}
where $\gamma_0$ and $\eta_j$ come from the relations $\Psi_1^{(1)}(0,0,ik_0)=\gamma_0\Psi_2^{(2)}(0,0,ik_0)$ and $\Psi_1^{(1)}(0,0,p_j)=\eta_j\Psi_2^{(2)}(0,0,p_j)$
for the eigenfunctions of the first equation from the Lax pair (\ref{LP}).


\begin{remark}
If $b(k)$ allows analytical continuation  into a
sufficiently large band in the complex plane, the norming constants take the form: 
$$
\gamma_0=b(ik_0),\,\eta_j=b(p_j).
$$
\end{remark}


\begin{description}
\item [Basic Riemann--Hilbert Problem:] 

Given (i) $b(k)$ for $k\in{\mathbb R}$, (ii) $a_j(k)$, $j=1,2$ having the properties of 
Proposition \ref{properties} and satisfying Assumptions A,
with $\{ik_0, \{p_j, -\bar p_j\}_1^n\}$ being the zeros of $a_1(k)$ in ${\mathbb C}^+$, and 
(iii) $\gamma_0$ and $\{\eta_j\}_1^n$, 
find the $2\times 2$-valued function $M(x,t,k)$, piece-wise meromorphic in $k$ relative to  
$\mathbb{R}$ and satisfying the following conditions:

\begin{description}
\item [(i)] Jump conditions. The boundary values $M_{\pm}(x,t,k)=M(x,t,k\pm i0)$,  
$k\in\mathbb{R}\setminus\{0\}$ satisfy the condition
\begin{equation}\label{jRH}
M_+(x,t,k)=M_-(x,t,k)J(x,t,k),\qquad k\in\mathbb{R}\setminus\{0\},
\end{equation}
where the jump matrix $J(x,t,k)$ is given by (\ref{jump}), with $r_j(k)$  given
in terms of $b(k)$ and $a_j(k)$
by (\ref{r12}).
\item[(ii)] Normalization at $k=\infty$:
$$
M(x,t,k)=I+O(k^{-1}),\qquad k\to\infty.
$$
\item [(iii)] Residue conditions (\ref{resin}).
\item [(iv)] \textit{Pseudo-residue} conditions at $k=0$: $M(x,t,k)$ satisfies 
(\ref{z}), 
where $v_j(x,t)$, $j=1,2$ are some (not prescribed) functions.
\end{description}

Assume that the RH problem (i)--(iv) has a solution $M(x,t,k)$. 
Then the solution of the Cauchy problem (\ref{1}), (\ref{2}) is given in terms of the (12) and (21) entries of $M(x,t,k)$ as follows:
\begin{equation}\label{sol}
q(x,t)=2i\lim_{k\to\infty}kM_{12}(x,t,k),
\end{equation}
and
\begin{equation}\label{sol1}
q(-x,t)=-2i\lim_{k\to\infty}k\overline{M_{21}(x,t,k)}.
\end{equation}
\end{description}

\begin{remark}
We coin a term \textit{Pseudo-residue} condition for (\ref{z}), since after appropriate transformations of the basic Riemann-Hilbert problem it turns into a conventional residue condition (see the RH problem (\ref{RHhat})--(\ref{zc}) below).
\end{remark}

The solution of the RH problem is unique, if exists. Indeed, if $M$ and $\tilde M$ are 
two solutions, then
conditions (\ref{z}) provide the boundedness of $M \tilde M^{-1}$ at $k=0$
and, therefore, the applicability of standard arguments based of the Liouville theorem.

\begin{remark} 
From  (\ref{sol}) and (\ref{sol1}) it follows  that in order to present the solution of (\ref{1}), (\ref{2}) for all $x\in \mathbb R$, it is sufficient to have the solution the RH problem for, say, $x\ge 0 $ only.
\end{remark}

\section{The long-time asymptotics}\label{asympt}

In this section we study the long-time asymptotics of the solution of the Cauchy problem 
(\ref{1}),
(\ref{2}). Our analysis is based on the adaptation of 
 the nonlinear steepest-decent method \cite{DZ} to the (oscillatory) RH problem (i)--(iv). 

\subsection{Jump factorizations}

Introduce the  variable $\xi:=\frac{x}{4t}$ and the phase function
\begin{equation}\label{theta}
\theta(k,\xi)=4k\xi+2k^2.
\end{equation}
The jump matrix (\ref{jump}) allows, similarly to  \cite{RS}, two triangular factorizations:
\begin{subequations}\label{tr}
\begin{align}
\label{tr1}
J(x,t,k)&=
\begin{pmatrix}
1& 0\\
\frac{r_1(k)}{1+ r_1(k)r_2(k)}e^{2it\theta}& 1\\
\end{pmatrix}
\begin{pmatrix}
1+ r_1(k)r_2(k)& 0\\
0& \frac{1}{1+ r_1(k)r_2(k)}\\
\end{pmatrix}
\begin{pmatrix}
1& \frac{ r_2(k)}{1+ r_1(k)r_2(k)}e^{-2it\theta}\\
0& 1\\
\end{pmatrix}
\\
\label{tr2}
&=
\begin{pmatrix}
1& r_2(k)e^{-2it\theta}\\
0& 1\\
\end{pmatrix}
\begin{pmatrix}
1& 0\\
r_1(k)e^{2it\theta}& 1\\
\end{pmatrix}.
\end{align}
\end{subequations}
In view of  (\ref{sol}) and (\ref{sol1}), we will study the RH problem for $\xi>0$ only.
Since the  phase function $\theta(k,\xi)$ is the same as in the case of the local NLS,
its  signature table (see Figure \ref{signtable}) suggests us to follow the standard steps \cite{DIZ, DZ}
involving getting rid of the diagonal factor in (\ref{tr1}) and the deformation of the 
original RH (relative to the real axis) to the new one relative to a cross, where the jump matrix converges,
as $t\to\infty$,
to the identity matrix uniformly away from any vicinity of the stationary phase point $k=-\xi$.

\begin{figure}
\begin{minipage}[h]{0.99\linewidth}
\centering{\includegraphics[width=0.4\linewidth]{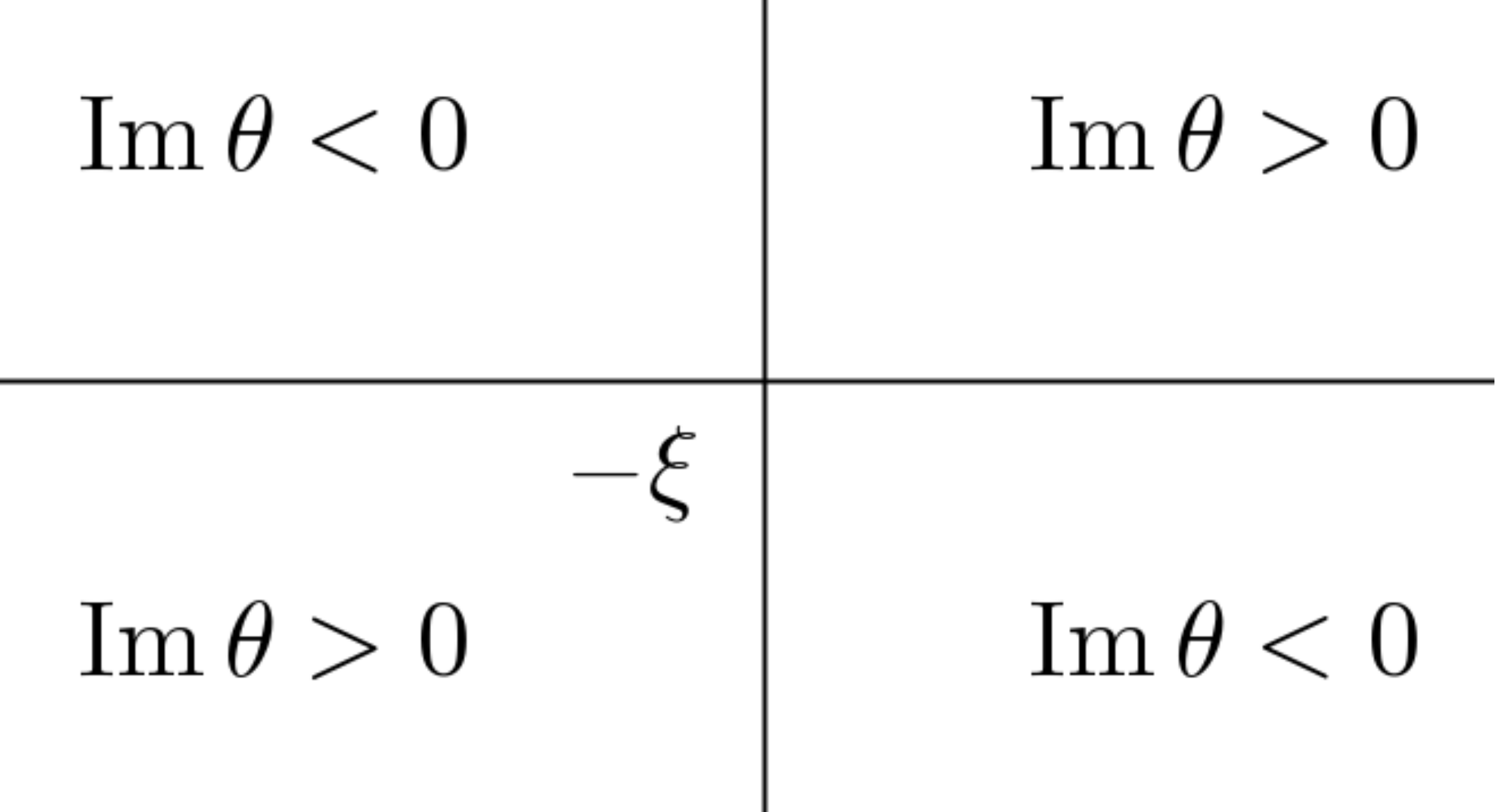}}
\caption{Signature table}

\label{signtable}
\end{minipage}
\end{figure}

In order to get rid of the diagonal factor in a factorization like (\ref{tr1}), 
one usually \cite{DIZ, DZ}  introduces a (scalar) 
function $\delta(k)$ that solves the scalar RH problem with the 
jump condition $\delta_+(k)=\delta_-(k)(1+r_1(k)r_2(k))$ for 
$k\in(-\infty,-\xi)$. In the case $1+r_1(k)r_2(k)>0$ for all $k\in \mathbb R$,
as it takes place for the local NLS equation, $\delta(k)$ is defined via a Cauchy integral
involving $\ln (1+r_1(k)r_2(k))$. However, in the case of the nonlocal NLS equation,
the values of $1+r_1(k)r_2(k)$ are, in general, complex, which would lead 
to a strong singularity of $\delta(k)$ at $k=-\xi$. In order to avoid this,
we proceed as follows:
\begin{enumerate}
	\item 
	First, define   some ``partial functions delta'':

\begin{subequations}
\begin{align}
&\delta_s(k)=\delta_s(k;\omega_{n-s},\omega_{n-s+1})=\exp\left\{
\frac{1}{2\pi i}\int_{-\omega_{n-s+1}}^{-\omega_{n-s}}
\frac{\ln(1+r_1(\zeta)r_2(\zeta))}{\zeta-k}\,d\zeta
\right\},\quad s=\overline{0,m-1},\\
&\delta_m(k)=\delta_m(k,\xi;\omega_{n-m+1})=\exp\left\{
\frac{1}{2\pi i}\int_{-\omega_{n-m+1}}^{-\xi}
\frac{\ln(1+r_1(\zeta)r_2(\zeta))}{\zeta-k}\,d\zeta
\right\},
\end{align}
\end{subequations}
where $-\xi\in(-\omega_{n-m+1}, -\omega_{n-m})$, $m=\overline{0,n}$ and the following branches of logarithm are chosen
for $s=\overline{0,m}$ (notice that since we deal with $\xi>0$, the behavior of $r_j(k)$ at $k=0$ does not affect $\delta_m(k)$):
\begin{equation}\label{branch}
\ln(1+r_1(\zeta)r_2(\zeta))=\ln|1+r_1(\zeta)r_2(\zeta)|+i\left(
\int_{-\infty}^{\zeta}\,d\arg(1+r_1(z)r_2(z))+2\pi s
\right).
\end{equation}

\item 
Second, 
define
	\begin{equation}
\delta(k,\xi;\{\omega_{n-s}\}_{s=0}^{m-1}):=
\prod\limits_{s=0}^{m}\delta_s(k).
\end{equation}
\end{enumerate}

In this way, we have that 
$\delta(k)= \delta(k,\xi;\{\omega_{n-s}\}_{s=0}^{m-1})$
solves, for each $-\xi\in (-\omega_{n-m+1}, -\omega_{n-m})$,
$m=\overline{0,n}$ (see (\ref{order})),
 the scalar RH problem
\begin{subequations}\label{13}
\begin{align}
&\delta_+(k)=\delta_-(k)(1+r_1(k)r_2(k)),&&
k\in(-\infty,-\xi)\setminus\{-\omega_{n-s}\}_{s=0}^{m-1},\\
&\delta(k)\rightarrow 1,&&
k\rightarrow\infty.
\end{align}
\end{subequations}
Moreover, it has  particular singularities  at $k=-\omega_{n-s}$, $s=\overline{0,m-1}$, and
 $k=-\xi$. 
Namely, adopting the convention that $\prod\limits_{s=m_1}^{m_2}F_s=1$ if $m_1>m_2$, we  have 
\begin{equation}\label{delta}
\delta(k,\xi;\{\omega_{n-s}\}_{s=0}^{m-1})=
(k+\xi)^{i\nu(-\xi)}\prod\limits_{s=0}^{m-1}(k+\omega_{n-s})^{-1}
\exp\left\{\sum\limits_{s=0}^{m}\chi_s(k)\right\},
\end{equation}
with 
\begin{subequations}
\begin{align}
&\chi_s(k)=-\frac{1}{2\pi i}\int_{-\omega_{n-s+1}}^{-\omega_{n-s}}
\ln(k-\zeta)\,d_{\zeta}\ln(1+r_1(\zeta)r_2(\zeta)),\quad s=\overline{0,m-1},\\
&\chi_m(k)=-\frac{1}{2\pi i}\int_{-\omega_{n-m+1}}^{-\xi}
\ln(k-\zeta)\,d_{\zeta}\ln(1+r_1(\zeta)r_2(\zeta)),
\end{align}
\end{subequations}
and 
\begin{equation}\label{nu}
\nu(-\xi)=-\frac{1}{2\pi}\ln|1+r_1(-\xi)r_2(-\xi)|
-\frac{i}{2\pi}\left(\int_{-\infty}^{-\xi}\,
d\arg(1+r_1(\zeta)r_2(\zeta))+2\pi m\right),
\end{equation}
so that (see  Assumptions A(b) (\ref{windingRH}) and relation (\ref{scalrel}))
$\Im\nu(-\xi)$ satisfies the inequalities 
\[
-\frac{1}{2}< \Im\nu(-\xi) <\frac{1}{2}.
\]

\begin{remark}\label{rem6}
In our asymptotic analysis, it is important to have 
$\Im\nu(-\xi)\in(-\frac{1}{2},\frac{1}{2})$. 
This property will provide the convergence, as $t\to\infty$,
of the solution of the deformed Riemann-Hilbert problem (relative to the cross
centered at $k=-\xi$) to
 the identity matrix, see subsection \ref{RHdeform} below.
\end{remark}

Now we define
\begin{equation}
\tilde{M}(x,t,k)=M(x,t,k)
\delta^{-\sigma_3}(k,\xi;\{\omega_{n-s}\}_{s=0}^{m-1}),
\end{equation} 
and notice that $\tilde{M}$
 satisfies the conditions
\begin{subequations}\label{tildeM}
\begin{align}
\label{14.1}
&\tilde{M}_+(x,t,k)=\tilde{M}_-(x,t,k)\tilde{J}(x,t,k), && k\in\mathbb{R}\setminus\left(\{-\omega_{n-s}\}_{s=0}^{m-1}\cup\{0\}
\right),\\
\label{14.1-norm}
&\tilde{M}(x,t,k)\rightarrow I, &&k\rightarrow\infty,
\end{align}
\end{subequations}
where (for simplicity we drop all arguments of $\delta$ except $k$ and $\xi$)
\begin{equation}
\label{as4}
\tilde{J}(x,t,k)=
\begin{cases}
\begin{pmatrix}
1& 0\\
\frac{r_1(k)\delta_-^{-2}(k,\xi)}{1+r_1(k)r_2(k)}e^{2it\theta}& 1\\
\end{pmatrix}
\begin{pmatrix}
1& \frac{r_2(k)\delta_+^{2}(k,\xi)}{1+r_1(k)r_2(k)}e^{-2it\theta}\\
0& 1\\
\end{pmatrix},\, k\in(-\infty,-\xi)\setminus\{-\omega_{n-s}\}_{s=0}^{m-1},
\\
\begin{pmatrix}
1& r_2(k)\delta^2(k,\xi)e^{-2it\theta}\\
0& 1\\
\end{pmatrix}
\begin{pmatrix}
1& 0\\
r_1(k)\delta^{-2}(k,\xi)e^{2it\theta}& 1\\
\end{pmatrix},\, k\in(-\xi,\infty)\setminus\{0\},
\end{cases}
\end{equation}
as well as  the residue conditions
\begin{subequations}
\begin{align}
&\underset{k=ik_0}{\operatorname{Res}}\tilde M^{(1)}(x,t,k)=
\frac{\gamma_0}{\dot{a}_1(ik_0)\delta^{2}(ik_0,\xi)}e^{-2k_0x-4ik_0^2t}
\tilde M^{(2)}(x,t,ik_0),\quad|\gamma_0|=1,\\
&\underset{k=p_j}{\operatorname{Res}}\tilde M^{(1)}(x,t,k)=
\frac{\eta_j}{\dot a_1(p_j)\delta^{2}(p_j,\xi)}
e^{2ip_jx+4ip_j^2t}\tilde M^{(2)}(x,t,p_j),\quad j=\overline{1,n},\\
&\underset{k=-\overline{p}_j}{\operatorname{Res}}\tilde M^{(1)}(x,t,k)=
\frac{1}{\bar\eta_j\dot a_1(-\overline{p}_j)
\delta^{2}(-\overline{p}_j,\xi)}
e^{-2i\overline{p}_jx+4i\overline{p}_j^2t}\tilde M^{(2)}(x,t,-\overline{p}_j),
\quad j=\overline{1,n},
\end{align}
\end{subequations}
related  to the zeros in $\mathbb{C}^{+}$, 
and the pseudo-residue conditions at $k=0$ 
\begin{subequations}\label{14}
\begin{align}
\label{14.2}
&\tilde{M}_+(x,t,k)=
\begin{pmatrix}
\frac{4v_1(x,t)}{A^2a_2(0)\delta(0,\xi)}& -\delta(0,\xi)\overline{v_2}(-x,t)\\
\frac{4v_2(x,t)}{A^2a_2(0)\delta(0,\xi)}& -\delta(0,\xi)\overline{v_1}(-x,t)
\end{pmatrix}
(I+O(k))
\begin{pmatrix}
k& 0\\
0& \frac{1}{k}
\end{pmatrix},\,
k\rightarrow +i0,\\
\label{14.3}
&\tilde{M}_-(x,t,k)=\frac{2i}{A}
\begin{pmatrix}
\frac{-\overline{v_2}(-x,t)}{\delta(0,\xi)}& \delta(0,\xi)\frac{v_1(x,t)}{a_2(0)}\\
\frac{-\overline{v_1}(-x,t)}{\delta(0,\xi)}& \delta(0,\xi)\frac{v_2(x,t)}{a_2(0)}
\end{pmatrix}
+O(k),\,
k\rightarrow -i0,
\end{align}
\end{subequations}
and at $k=-\omega_{n-s}$, $s=\overline{0,m-1}$:
\begin{equation}\label{p-res-2}
\tilde{M}_{\pm}(x,t,k)=
\left(\tilde{M}_{(s)\pm}(x,t)+O(k+\omega_{n-s})\right)(k+\omega_{n-s})^{\sigma_3},\,
k\to-\omega_{n-s},\,s=\overline{0,m-1},
\end{equation}
where $\det\tilde{M}_{(s)\pm}(x,t)=1$ for all $x,t$.

\subsection{The RH problem deformations}\label{RHdeform}
In order to turn oscillations to exponential decay in the Riemann-Hilbert problem 
(\ref{tildeM})--(\ref{p-res-2}), we ``deform'' the contour off the real axis. When doing it, we assume that the reflection coefficients $r_j(k)$, $j=1,2$, can be analytically continued 
into the whole complex plane. 
 This takes place, for example, if $q_0(x)$ is a local perturbation of
 the pure step initial data
(\ref{shifted-step})). Alternatively,  it is possible to approximate $r_j(k)$ and $\frac{r_j(k)}{1+r_1(k)r_2(k)}$ by some rational functions with well-controlled errors (see \cite{DIZ}).

Define $\hat{M}(x,t,k)$ as follows 
(see Figure \ref{F1}; notice that all zeros of $a_1(k)$ are located in $\hat{\Omega}_0$):
\begin{figure}[h]
\centering{\includegraphics[width=0.99\linewidth]{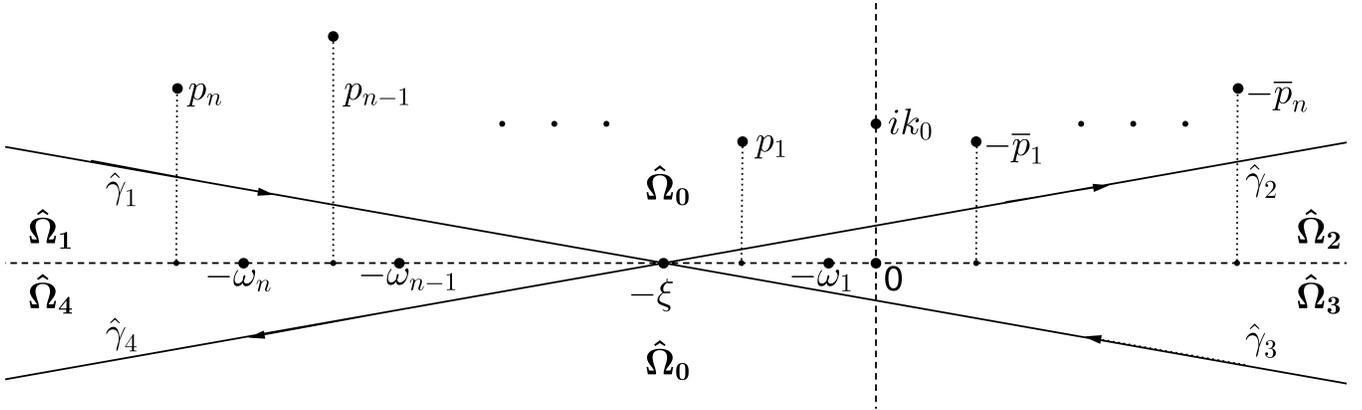}}
\caption{The domains $\hat\Omega_j$, $j=0,\dots,4$  and the contour 
$\hat\Gamma=\hat\gamma_1\cup...\cup\hat\gamma_4$}
\label{F1}
\end{figure}
\begin{equation}
\hat{M}(x,t,k)=
\begin{cases}
\tilde{M}(x,t,k),& k\in\hat\Omega_0,\\
\tilde{M}(x,t,k)
\begin{pmatrix}
1& \frac{-r_2(k)\delta^{2}(k,\xi)}{1+r_1(k)r_2(k)}e^{-2it\theta}\\
0& 1\\
\end{pmatrix}
,& k\in\hat\Omega_1,
\\
\tilde{M}(x,t,k)
\begin{pmatrix}
1& 0\\
-r_1(k)\delta^{-2}(k,\xi)e^{2it\theta}& 1\\
\end{pmatrix}
,& k\in\hat\Omega_2,
\\
\tilde{M}(x,t,k)
\begin{pmatrix}
1& r_2(k)\delta^2(k,\xi)e^{-2it\theta}\\
0& 1\\
\end{pmatrix}
,& k\in\hat\Omega_3,
\\
\tilde{M}(x,t,k)
\begin{pmatrix}
1& 0\\
\frac{r_1(k)\delta^{-2}(k,\xi)}{1+r_1(k)r_2(k)}e^{2it\theta}& 1\\
\end{pmatrix}
,& k\in\hat\Omega_4.
\end{cases}
\end{equation}
Then $\hat M(x,t,k)$ satisfies the RH problem with the jump across $\hat\Gamma$:
\begin{subequations}\label{RHhat}
\begin{align}
&\hat{M}_+(x,t,k)=\hat{M}_-(x,t,k)\hat{J}(x,t,k),&&k\in\hat\Gamma,\\
&\hat{M}(x,t,k)\rightarrow I,&&k\rightarrow\infty,
\end{align}
\end{subequations}
where the jump matrix has the form
\begin{equation}
\label{J-hat}
\hat{J}(x,t,k)=
\begin{cases}
\begin{pmatrix}
1& \frac{r_2(k)\delta^{2}(k,\xi)}{1+r_1(k)r_2(k)}e^{-2it\theta}\\
0& 1\\
\end{pmatrix}
,& k\in\hat\gamma_1,
\\
\begin{pmatrix}
1& 0\\
r_1(k)\delta^{-2}(k,\xi)e^{2it\theta}& 1\\
\end{pmatrix}
,& k\in\hat\gamma_2,
\\
\begin{pmatrix}
1& -r_2(k)\delta^2(k,\xi)e^{-2it\theta}\\
0& 1\\
\end{pmatrix}
,& k\in\hat\gamma_3,
\\
\begin{pmatrix}
1& 0\\
\frac{-r_1(k)\delta^{-2}(k,\xi)}{1+r_1(k)r_2(k)}e^{2it\theta}& 1\\
\end{pmatrix}
,& k\in\hat\gamma_4,
\end{cases}
\end{equation}

as well as with the residue conditions
\begin{subequations}
\begin{align}
&\underset{k=ik_0}{\operatorname{Res}}\hat M^{(1)}(x,t,k)=c_1(x,t)
\hat M^{(2)}(x,t,ik_0),\quad|\gamma_0|=1,\\
\label{respj}
&\underset{k=p_j}{\operatorname{Res}}\hat M^{(1)}(x,t,k)=f_j(x,t)\hat M^{(2)}(x,t,p_j),\quad j=\overline{1,n},\\
&\underset{k=-\overline{p}_j}{\operatorname{Res}}\hat M^{(1)}(x,t,k)=
\hat f_j(x,t)\hat M^{(2)}(x,t,-\overline{p}_j),\quad j=\overline{1,n},
\end{align}
\end{subequations}
where 
\begin{equation}\label{f-j}
c_1(x,t)=
\frac{\gamma_0e^{-2k_0x-4ik_0^2t}}{\dot{a}_1(ik_0)\delta^{2}(ik_0,\xi)}, \quad
f_j(x,t)=\frac{\eta_je^{2ip_jx+4ip_j^2t}}{\dot a_1(p_j)\delta^{2}(p_j,\xi)},\quad 
\hat f_j(x,t)=\frac{e^{-2i\overline{p}_jx+4i\overline{p}_j^2t}}
{\bar\eta_j\dot a_1(-\overline{p}_j)
\delta^{2}(-\overline{p}_j,\xi)},
\end{equation}
and the residue condition at $k=0$
\begin{equation}\label{zc}
\underset{k=0}{\operatorname{Res}}\hat M^{(2)}(x,t,k)=c_0(\xi)
\hat M^{(1)}(x,t,0),
\end{equation}
with $c_0(\xi)=\frac{A\delta^2(0,\xi)}{2i}$. 
Notice that it is the pseudo-residue conditions (\ref{z}) that reduce to a conventional 
residue condition (\ref{zc}). Finally, we describe
the behavior at $\{-\omega_{n-s}\}_{s=0}^{m-1}$ for $-\omega_{n-m+1}<-\xi<-\omega_{n-m}$, $m=\overline{0,n}$:
\begin{equation}\label{omegahat}
\hat{M}(x,t,k)=
\left(\hat{M}_{(s)}(x,t)+O(k+\omega_{n-s})\right)
(k+\omega_{n-s})^{\sigma_3},\,
k\to-\omega_{n-s},\,s=\overline{0,m-1},
\end{equation}
where $\hat{M}_{(s)}(x,t)$ are some (not prescribed) 
matrix functions with $\det\hat{M}_{(s)}(x,t)=1$ for all $x,t$.

\begin{proposition}\label{asRH}
For any fixed $\xi=\frac{x}{4t}$ with $\xi>0$, the solution of the Riemann-Hilbert problem 
(\ref{RHhat})-(\ref{omegahat}) can be ``reduced'', as $t\to\infty$,
 to a sectionally meromorphic matrix-valued function $M^{as}(\xi,t,k)$,
in the sense that $q(x,t)$ extracted from the large-$k$ asymptotics of 
$\hat M(x,t,k)$ and $M^{as}(\xi,t,k)$ are exponentially close as $t\to\infty$:
\begin{align}\label{solas}
&q(x,t)=2i\lim_{k\to\infty}kM_{12}^{as}(\xi,t,k)+
\mbox{exponentially small terms},\quad t\to\infty,\\
\label{sol1as}
&q(-x,t)=-2i\lim_{k\to\infty}k\overline{M_{21}^{as}(\xi,t,k)}+
\mbox{exponentially small terms},\quad t\to\infty.
\end{align}
Here $M^{as}$ solves one of the following Riemann-Hilbert problems, 
depending on the value of $\xi$, with a single residue condition
at $k=0$ (to simplify the notations, we set $\Re p_0:=0$ and $\prod\limits_{s=m_1}^{m_2}F_s=1$ if $m_1>m_2$):
\begin{enumerate}[(i)]
\item for $-\omega_{n-m+1}<-\xi<\Re p_{n-m}$, $m=\overline{0,n}$, $M^{as}$ solves
\begin{subequations}\label{RHas1}
\begin{align}
&M^{as}_+(\xi,t,k)=M^{as}_-(\xi,t,k)J^{as}(\xi,t,k),&& k\in\hat\Gamma,\\
&M^{as}(\xi,t,k)\to I, && k\to\infty,\\
\label{RHas1c}
&\underset{k=0}{\operatorname{Res}}M^{as\,(2)}(\xi,t,k)=
c_{0}^{as}(\xi)M^{as\,(1)}(\xi,t,0),
\end{align}
\end{subequations}
with 
\begin{equation}\label{c0as}
c_{0}^{as}(\xi)=\frac{A\delta^2(0,\xi)}{2i}
\prod\limits_{s=0}^{m-1}\left(\frac{\omega_{n-s}}{p_{n-s}}\right)^2,
\end{equation}
and 
$$
J^{as}(\xi,t,k)=
\left(\prod\limits_{s=0}^{m-1}\frac{k+\omega_{n-s}}{k-p_{n-s}}\right)^{\sigma_3}
\hat{J}(x,t,k)
\left(\prod\limits_{s=0}^{m-1}\frac{k+\omega_{n-s}}{k-p_{n-s}}\right)
^{-\sigma_3},\,k\in\hat\Gamma.
$$
\item for $\Re p_{n-m}<-\xi<-\omega_{n-m}$, $m=\overline{0,n-1}$, $M^{as}$ solves
\begin{subequations}\label{RHas2}
\begin{align}
&M^{as}_+(\xi,t,k)=M^{as}_-(\xi,t,k)J^{as}(\xi,t,k),&& k\in\hat\Gamma,\\
&M^{as}(\xi,t,k)\to I, && k\to\infty,\\
&\underset{k=0}{\operatorname{Res}}\ M^{as\,(1)}(\xi,t,k)=
c_{0}^{as\#}(\xi)M^{as\,(2)}(\xi,t,0),
\end{align}
\end{subequations}
with 
\begin{equation}\label{c0as2}
c_0^{as\#}(\xi)=\frac{2ip_{n-m}^2}{A\delta^2(0,\xi)}
\prod\limits_{s=0}^{m-1}\left(\frac{p_{n-s}}{\omega_{n-s}}\right)^2,
\end{equation}
and
$$
J^{as}(\xi,t,k)=
\left(d(k)
\prod\limits_{s=0}^{m-1}\frac{k+\omega_{n-s}}{k-p_{n-s}}\right)^{\sigma_3}
\hat{J}(x,t,k)
\left(d(k)
\prod\limits_{s=0}^{m-1}\frac{k+\omega_{n-s}}{k-p_{n-s}}\right)^{-\sigma_3},
\,k\in\hat\Gamma,
$$
with $d(k)=\frac{k}{k-p_{n-m}}$.
\end{enumerate}
\end{proposition}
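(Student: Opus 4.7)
The plan is to construct $M^{as}$ by right-multiplying $\hat M$ by a rational absorber $(dB)^{-\sigma_3}$ with
$B(k)=\prod_{s=0}^{m-1}\frac{k+\omega_{n-s}}{k-p_{n-s}}$, $d(k)=1$ in case (i), $d(k)=\frac{k}{k-p_{n-m}}$ in case (ii), and then to argue that the residue data not absorbed by $dB$ is exponentially small in $t$ and can be dropped. First I would estimate the residue coefficients (\ref{f-j}): for $\xi>0$,
\[
|c_1|=O(e^{-8k_0\xi t}),\quad
|\hat f_j|=O(e^{-8t\,\Im p_j(\xi+|\Re p_j|)}),\quad
|f_j|=O(e^{-8t\,\Im p_j(\xi+\Re p_j)}),
\]
so every residue decays exponentially except $f_{n-s}$, which grows precisely when $-\xi>\Re p_{n-s}$. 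Combined with the ordering (\ref{order}), the exponentially large residues are those at $\{p_{n-s}\}_{s=0}^{m-1}$ in case (i) and at $\{p_{n-s}\}_{s=0}^{m}$ in case (ii), matching the poles of $dB$.

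Having identified the bad residues, I would verify that $M^{as}:=\hat M(dB)^{-\sigma_3}$ satisfies the conjugated jump $M^{as}_+=M^{as}_-(dB)^{\sigma_3}\hat J(dB)^{-\sigma_3}$ on $\hat\Gamma$ (immediate, since $dB$ is analytic across $\hat\Gamma$) and the correct normalization at infinity (since $dB=1+O(k^{-1})$). The simple zeros of $dB$ at $\{-\omega_{n-s}\}$ annihilate the singular factor $(k+\omega_{n-s})^{\sigma_3}$ in (\ref{omegahat}); the simple poles of $dB$ at the bad spectral points cancel the first-column residue of $\hat M$, while $M^{as,(2)}=\hat M^{(2)}(dB)$ inherits new simple poles there whose residues, computed through the identity $\hat M^{(2)}(p_{n-s})=(\operatorname{Res}_{k=p_{n-s}}\hat M^{(1)})/f_{n-s}$, are proportional to $1/f_{n-s}$, hence exponentially small. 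The residues at $ik_0$ and at $-\bar p_j$ are not touched by $dB$ but are already exponentially small by the initial estimate.

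The condition at $k=0$ requires separate bookkeeping. In case (i), $B(0)=(-1)^m\prod_{s=0}^{m-1}\omega_{n-s}/p_{n-s}$ is nonzero, so expanding $M^{as,(2)}=\hat M^{(2)}B$ about $k=0$ with the aid of (\ref{zc}) and of $\hat M^{(1)}(0)=M^{as,(1)}(0)B(0)$ yields (\ref{RHas1c}) with $c_0^{as}=c_0(\xi)B(0)^{2}$, which is exactly (\ref{c0as}). In case (ii), the extra zero of $d(k)$ at the origin cancels the pole of $\hat M^{(2)}$ there and transfers the singularity to $M^{as,(1)}=\hat M^{(1)}/(dB)$; writing $(dB)(k)=kE+O(k^{2})$ with $E=-B(0)/p_{n-m}$ and expanding both columns gives $\operatorname{Res}_{k=0}M^{as,(1)}=\hat M^{(1)}(0)/E$ and $M^{as,(2)}(0)=c_0(\xi)\hat M^{(1)}(0)E$, which combine to $c_0^{as\#}=1/(c_0(\xi)E^{2})=\frac{2ip_{n-m}^2}{A\delta^2(0,\xi)}\prod_{s=0}^{m-1}(p_{n-s}/\omega_{n-s})^{2}$, reproducing (\ref{c0as2}).

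To justify dropping the remaining exponentially small residue conditions and to reach the Riemann--Hilbert problems (i)/(ii) of the statement, I would compare $\hat M(dB)^{-\sigma_3}$ with $M^{as}$ through the ratio $R=\hat M(dB)^{-\sigma_3}(M^{as})^{-1}$, which solves a Riemann--Hilbert problem with identity jump and isolated simple poles whose residues are $O(e^{-ct})$. The standard small-norm argument then gives $R=I+O(e^{-ct})$ uniformly on compacts away from the poles, so the $k^{-1}$ coefficient at infinity of $M^{as}$ matches that of $\hat M(dB)^{-\sigma_3}$, and hence (via $dB=1+O(k^{-1})$) that of $\hat M$, up to $O(e^{-ct})$, which yields (\ref{solas})--(\ref{sol1as}). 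The hardest part will be the bookkeeping in case (ii), where $dB$ has four distinct types of special points whose interplay with (\ref{omegahat}) and (\ref{zc}) must be tracked simultaneously.
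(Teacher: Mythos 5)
Your proposal is correct and takes essentially the same route as the paper: the rational conjugation by $\bigl(d(k)\prod_{s=0}^{m-1}\frac{k+\omega_{n-s}}{k-p_{n-s}}\bigr)^{-\sigma_3}$ is precisely the paper's transformation (\ref{singul}) (augmented by the factor $d^{-\sigma_3}$ in case (ii)), your identification of the exponentially growing residues via the signs of $\Im p_j(\xi+\Re p_j)$ matches the paper's, and your direct expansions at $k=0$ reproduce (\ref{c0as}) and (\ref{c0as2}). The only difference is organizational: in case (ii) the paper first converts the residue conditions at $k=0$ and $k=p_{n-m}$ into jumps on small circles and uses the swap matrices $N$ and $Q$ to exhibit the near-identity jump before discarding it, whereas you compute the transformed (now exponentially small) residues directly and discard them through the comparison matrix $R$ --- two standard, equivalent implementations of the same step.
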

\begin{proof}
(i) First, consider $-\omega_{n-m+1}<-\xi<\Re p_{n-m}$, $m=\overline{0,n}$.
Then $\hat M(x,t,k)$ has $m$ singular points $k=-\omega_{n-s}$, $s=\overline{0,m-1}$, and $m$ exponentially growing residue conditions at  $k=p_{n-s}$, $s=\overline{0,m-1}$, 
see (\ref{respj}). Introducing $\check M(x,t,k)$ by
\begin{equation}\label{singul}
\check M(x,t,k):=\hat M(x,t,k) 
\left(
\prod\limits_{s=0}^{m-1}\frac{k+\omega_{n-s}}{k-p_{n-s}}
\right)^{-\sigma_3},\quad k\in\mathbb{C},
\end{equation}
we obtain that 
 $\check M(x,t,k)$  is, on one hand, bounded at $k=-\omega_{n-s}$, $s=\overline{0,m-1}$, and on the other hand,  has exponentially decaying residue conditions at all points of the discrete spectrum: $ik_0$, $p_j$, $-\overline{p}_j$, $j=\overline{1,n}$. Direct calculations show that $\check M(x,t,k)$ has the residue condition at  $k=0$ and the jump  across
 $\hat\Gamma$ as indicated in (\ref{RHas1}) and (\ref{c0as}), and thus 
$q(x,t)$ as well as $q(-x,t)$ obtained via (\ref{sol}) and (\ref{sol1}) from the large-$k$ asymptotics of $\check M(x,t,k)$ 
are close, as $t\to\infty$, to that obtained from $M^{as}$ determined as the solution of the RH problem (\ref{RHas1}).

(ii) Now consider $\Re p_{n-m}<-\xi<-\omega_{n-m}$, $m=\overline{0,n-1}$.
Then  $\hat M(x,t,k)$ has $m$ singular points at $k=-\omega_{n-s}$, $s=\overline{0,m-1}$, 
and $m+1$ exponentially growing residue conditions at  $k=p_{n-s}$, $s=\overline{0,m}$. Applying  the same transformation (\ref{singul}) and ignoring the decaying 
residue conditions, we arrive at  
 the  Riemann-Hilbert problem with the exponentially growing 
residue condition at  $k=p_{n-m}$ (see (\ref{f-j})):
\begin{subequations}\label{RHaux}
\begin{align}
&\tilde M^{as}_+(\xi,t,k)=\tilde M^{as}_-(\xi,t,k)\tilde J^{as}(\xi,t,k),&& k\in\hat\Gamma,\\
&\tilde M^{as}(\xi,t,k)\to I, && k\to\infty,\\
&\underset{k=p_{n-m}}{\operatorname{Res}}\tilde M^{as\,(1)}(\xi,t,k)=
f(x,t)\tilde M^{as\,(2)}(\xi,t,p_{n-m}),\\
&\underset{k=0}{\operatorname{Res}}\tilde M^{as\,(2)}(\xi,t,k)=
c_{0}^{as}(\xi)\tilde M^{as\,(1)}(\xi,t,0),
\end{align}
\end{subequations}
where $f(x,t)=f_{n-m}(x,t)\prod\limits_{s=0}^{m-1}
\left(\frac{p_{n-m}-p_{n-s}}{p_{n-m}+\omega_{n-s}}\right)^2$, 
$c_0^{as}(\xi)$ is given by (\ref{c0as}), and
$$
\tilde J^{as}(x,t,k)=
\left(
\prod\limits_{s=0}^{m-1}\frac{k+\omega_{n-s}}{k-p_{n-s}}
\right)^{\sigma_3}
\hat{J}(x,t,k)
\left(
\prod\limits_{s=0}^{m-1}\frac{k+\omega_{n-s}}{k-p_{n-s}}
\right)^{-\sigma_3},\,k\in\hat \Gamma.
$$

In order to cope with the problem of growing residue condition, first we reformulate 
the RH problem (\ref{RHaux}) in such a way that instead of the residue conditions we will have appropriate jumps across small (counterclockwise oriented) circles $S_0$ and $S_{p_{n-m}}$ centered
 at  $k=0$ and $k=p_{n-m}$ respectively:
\begin{equation}
\nonumber
\hat M^{as}(x,t,k)=
\begin{cases}
\tilde{M}^{as}(x,t,k)
\begin{pmatrix}
1& -\frac{c_0^{as}(\xi)}{k}\\
0& 1\\
\end{pmatrix},
& k\mbox{ inside } S_0,\\
\tilde{M}^{as}(x,t,k)
\begin{pmatrix}
1& 0\\
-\frac{f(x,t)}{k-p_{n-m}}& 1\\
\end{pmatrix},
& k\mbox{ inside } S_{p_{n-m}},\\
\tilde{M}^{as}(x,t,k),
& \mbox{ otherwise }.
\end{cases}
\end{equation}
Then $\hat M^{as}(x,t,k)$ solves the following Riemann-Hilbert problem:
\begin{subequations}\label{RHaux2}
\begin{align}
&\hat M^{as}_+(x,t,k)=\hat M^{as}_-(x,t,k)\hat J^{as}(x,t,k),&& k\in\hat\Gamma\cup S_0\cup S_{p_{n-m}},\\
&\hat M^{as}(x,t,k)\to I, && k\to\infty,
\end{align}
\end{subequations}
with
\begin{equation}
\hat J^{as}(x,t,k)=
\begin{cases}
\tilde J^{as}(x,t,k),& k\in\hat\Gamma,\\
\begin{pmatrix}
1& -\frac{c_0^{as}(\xi)}{k}\\
0& 1
\end{pmatrix}, & k\in S_0,\\
\begin{pmatrix}
1& 0\\
-\frac{f(x,t)}{k-p_{n-m}}& 1
\end{pmatrix},& k\in S_{p_{n-m}}.
\end{cases}
\end{equation}

Now we introduce $\hat M^{as\#}(x,t,k)$ as follows:
\begin{equation}
\label{35}
\hat M^{as\#}(x,t,k)=
\begin{cases}
\hat{M}^{as}(x,t,k)N(\xi,k)
d^{-\sigma_3}(k),
& k\mbox{ inside } S_0,\\
\hat{M}^{as}(x,t,k)Q(x,t,k)
d^{-\sigma_3}(k),
& k\mbox{ inside } S_{p_{n-m}},\\
\hat{M}^{as}(x,t,k)
d^{-\sigma_3}(k),
& \mbox{ otherwise },
\end{cases}
\end{equation}
where $d(k)=\frac{k}{k-p_{n-m}}$,
$
N(\xi,k)=
\begin{pmatrix}
0& \frac{c_0^{as}(\xi)}{k}\\
-\frac{k}{c_0^{as}(\xi)}& 1
\end{pmatrix}
$, and
$
Q(x,t,k)=
\begin{pmatrix}
1& -\frac{k-p_{n-m}}{f(x,t)}\\
\frac{f(x,t)}{k-p_{n-m}}& 0
\end{pmatrix},
$
and notice  that $\hat M^{as\#}(x,t,k)$ solves the  Riemann-Hilbert problem
with decaying (to $I$) jump matrix across $S_{p_{n-m}}$:
\begin{subequations}\label{RHaux3}
\begin{align}
&\hat M^{as\#}_+(x,t,k)=\hat M^{as\#}_-(x,t,k)\hat J^{as\#}(x,t,k),&& k\in\hat\Gamma\cup S_0\cup S_{p_{n-m}},\\
&\hat M^{as\#}(x,t,k)\to I, && k\to\infty,
\end{align}
\end{subequations}
where
\begin{equation}
\hat J^{as\#}(x,t,k)=
\begin{cases}
d^{\sigma_3}(k)
\hat J^{as}(x,t,k)
d^{-\sigma_3}(k),& k\in\hat\Gamma,\\
\begin{pmatrix}
1& 0\\
-\frac{(k-p_{n-m})^2}{c_0^{as}(\xi)k}& 1
\end{pmatrix}, & k \text{ inside }  S_0,\\
\begin{pmatrix}
1& -\frac{k^2}{f(x,t)(k-p_{n-m})}\\
0& 1
\end{pmatrix},& k \text{ inside }  S_{p_{n-m}}.
\end{cases}
\end{equation}
Finally, introducing 
\begin{equation}
\check M^{as\#}(x,t,k)=
\begin{cases}
\hat M^{as\#}(x,t,k)
\begin{pmatrix}
1& 0\\
\frac{k-2p_{n-m}}{c_0^{as}(\xi)}& 1\\
\end{pmatrix},
& k\mbox{ inside } S_0,\\
\hat M^{as\#}(x,t,k),& \mbox{ otherwise },
\end{cases}
\end{equation}
and ignoring the decaying jump across $S_{p_{n-m}}$, we arrive at the Riemann-Hilbert problem (\ref{RHas2}) and, as in the case (i), $q(x,t)$ and $q(-x,t)$ obtained from (\ref{sol}) and (\ref{sol1}) are exponentially close to that obtained from $M^{as}$.
\end{proof}

\begin{corollary}
As $t\to\infty$, the solution has the following asymptotics (see also Figure \ref{fas})
\begin{equation}\label{asq1}
q(x,t)=\left\{
\begin{aligned}
& A\delta^2(0,\xi)
\prod\limits_{s=0}^{m-1}\left(\frac{\omega_{n-s}}{p_{n-s}}\right)^2+o(1),& -\Re p_{n-m}<\xi<\omega_{n-m+1},\\
&o(1),& -\omega_{n-m+1}<\xi<\Re p_{n-m}\ \text{and}\ \omega_{n-m}<\xi<-\Re p_{n-m},\\
&\frac{-4\overline{p}_{n-m}^2}{A\overline{\delta^2}(0,-\xi)}
\prod\limits_{s=0}^{m-1}\left(\frac{\overline{p}_{n-s}}
{\overline{\omega}_{n-s}}\right)^2+o(1),& \Re p_{n-m}<\xi<-\omega_{n-m},\\
\end{aligned}
\right.
\end{equation}
where $m=\overline{0,n}$.
\end{corollary}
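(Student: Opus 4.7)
The plan is to read off the four asymptotic regions of the corollary from the two cases of the preceding Proposition, using (\ref{solas}) to extract $q(x,t)$ when $\xi>0$ and (\ref{sol1as}) to recover $q(x,t)$ for $\xi<0$ by replacing $\xi$ with $-\xi>0$ and reading off $\overline{M^{as}_{21}}$. Under this dictionary the first and third lines of (\ref{asq1}) correspond to $\xi>0$ falling into ranges (i) and (ii) of the Proposition, respectively, while the second and fourth lines correspond to $\xi<0$ with $-\xi$ falling into ranges (i) and (ii). Thus only two model RH problems need to be analyzed.

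The next step is to justify that, as $t\to\infty$, $M^{as}(\xi,t,k)$ is well approximated by the solution of the "model" problem obtained by discarding the jump on $\hat\Gamma$ altogether and keeping only the residue condition at $k=0$. The standard nonlinear steepest-descent argument applies: on the deformed contours $\hat\gamma_j$ the factor $e^{\pm2it\theta(k,\xi)}$ decays exponentially away from the stationary phase point $k=-\xi$, and the rational prefactors $\prod_{s=0}^{m-1}\frac{k+\omega_{n-s}}{k-p_{n-s}}$ in case (i), together with the additional factor $d(k)=k/(k-p_{n-m})$ in case (ii), are bounded on $\hat\Gamma$ (and on the small auxiliary circles $S_0,S_{p_{n-m}}$ introduced during the proof of the Proposition), so they do not destroy this decay. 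The local contribution from a shrinking neighborhood of $k=-\xi$ is handled by the standard parabolic-cylinder parametrix; by Remark \ref{rem6} we have $\Im\nu(-\xi)\in(-\tfrac{1}{2},\tfrac{1}{2})$, so that the parametrix is well-posed and contributes an $O(t^{-1/2})$ error, which is $o(1)$. A small-norm argument then transfers the resulting $L^2$-closeness of the jump to the identity into closeness of $M^{as}$ to the model solution uniformly for $k$ large, so only the $1/k$ coefficient of $M^{as}$ is needed.

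The model problems are then solved by inspection. In case (i), an ansatz $M^{\mathrm{mod}}(k)=I+B/k$ with $B$ having only a $(1,2)$ entry satisfies $M^{\mathrm{mod}}\to I$ at infinity and the residue condition (\ref{RHas1c}) provided $B_{12}=c_0^{as}(\xi)$. Inserting this into (\ref{solas}) and using (\ref{c0as}) gives
\[
q(x,t)=2ic_0^{as}(\xi)+o(1)=A\delta^2(0,\xi)\prod_{s=0}^{m-1}\Bigl(\tfrac{\omega_{n-s}}{p_{n-s}}\Bigr)^{2}+o(1),
\]
which is the first line of (\ref{asq1}) for $\xi>0$; since $M^{\mathrm{mod}}_{21}\equiv0$, (\ref{sol1as}) applied at $-\xi>0$ produces the vanishing second line. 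In case (ii), the analogous ansatz with $B$ in the $(2,1)$ entry gives $B_{21}=c_0^{as\#}(\xi)$, so that $M^{\mathrm{mod}}_{12}\equiv0$ (yielding the third line, $q\to0$ for $\omega_{n-m}<\xi<-\Re p_{n-m}$), while (\ref{sol1as}) evaluated at $-\xi>0$ together with (\ref{c0as2}) gives
\[
q(x,t)=-2i\,\overline{c_0^{as\#}(-\xi)}+o(1)=\frac{-4\overline{p}_{n-m}^{\,2}}{A\,\overline{\delta^{2}(0,-\xi)}}\prod_{s=0}^{m-1}\Bigl(\tfrac{\overline{p}_{n-s}}{\overline{\omega}_{n-s}}\Bigr)^{2}+o(1),
\]
the fourth line of (\ref{asq1}).

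The main technical point is the second step: verifying that the modified jump $J^{as}-I$ is $L^2$-small on all components of $\hat\Gamma\cup S_0\cup S_{p_{n-m}}$ in a $t$-uniform way, despite the singularities of $\delta(k,\xi)$ at $k=-\xi$ and of the rational dressing factors at the spectral points $\{-\omega_{n-s}\}$ and $\{p_{n-s}\}$. The key observation is that in case (ii) the auxiliary factor $d(k)$ is chosen exactly so that the exponentially growing residue at $k=p_{n-m}$ is traded for a decaying jump on $S_{p_{n-m}}$, while the bookkeeping factor $\prod(k+\omega_{n-s})/(k-p_{n-s})$ eliminates the singularities at the remaining $-\omega_{n-s}$ and pushes the growing residues $p_{n-s}$, $s<m$, into decaying ones. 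Once this bookkeeping is in place the conclusion follows from the standard small-norm theory.
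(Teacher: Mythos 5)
Your proposal is correct and follows essentially the same route as the paper: the corollary is an immediate consequence of Proposition \ref{asRH}, obtained by discarding the jump across $\hat\Gamma$ (which contributes $O(t^{-1/2+|\Im\nu|})=o(1)$ because Assumptions A guarantee $|\Im\nu|<\tfrac12$) and solving the residue-only model problems $M=I+\frac{c_0^{as}}{k}E_{12}$ and $M=I+\frac{c_0^{as\#}}{k}E_{21}$, then reading off the four sectors from (\ref{solas})--(\ref{sol1as}); your explicit computations of $2ic_0^{as}(\xi)$ and $-2i\overline{c_0^{as\#}(-\xi)}$ reproduce (\ref{asq1}) exactly.
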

\begin{figure}[h]
\begin{minipage}[h]{0.5\linewidth}
\centering{\includegraphics[width=0.99\linewidth]{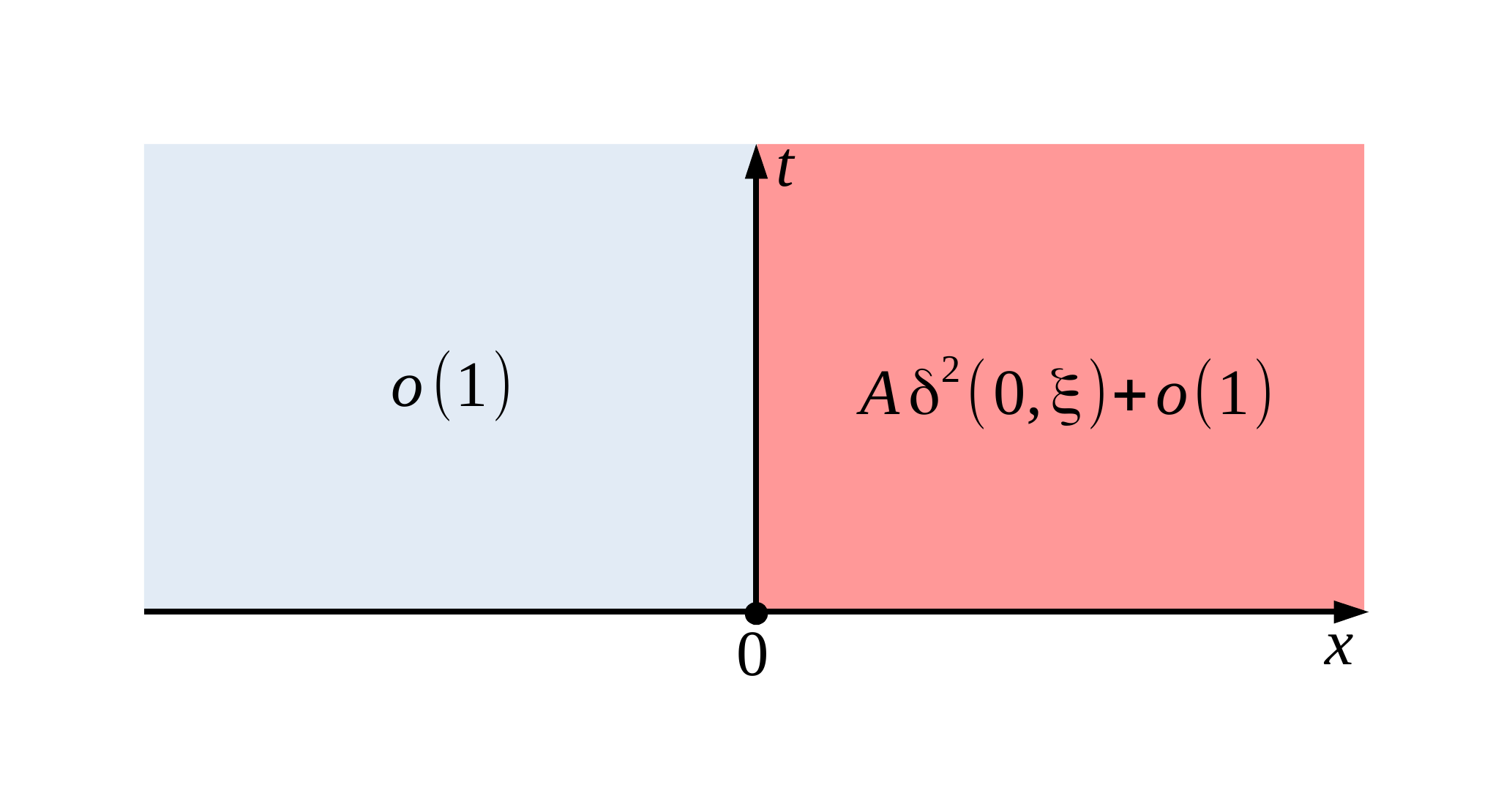}}
\end{minipage}
\hfill
\begin{minipage}[h]{0.5\linewidth}
\centering{\includegraphics[width=0.99\linewidth]{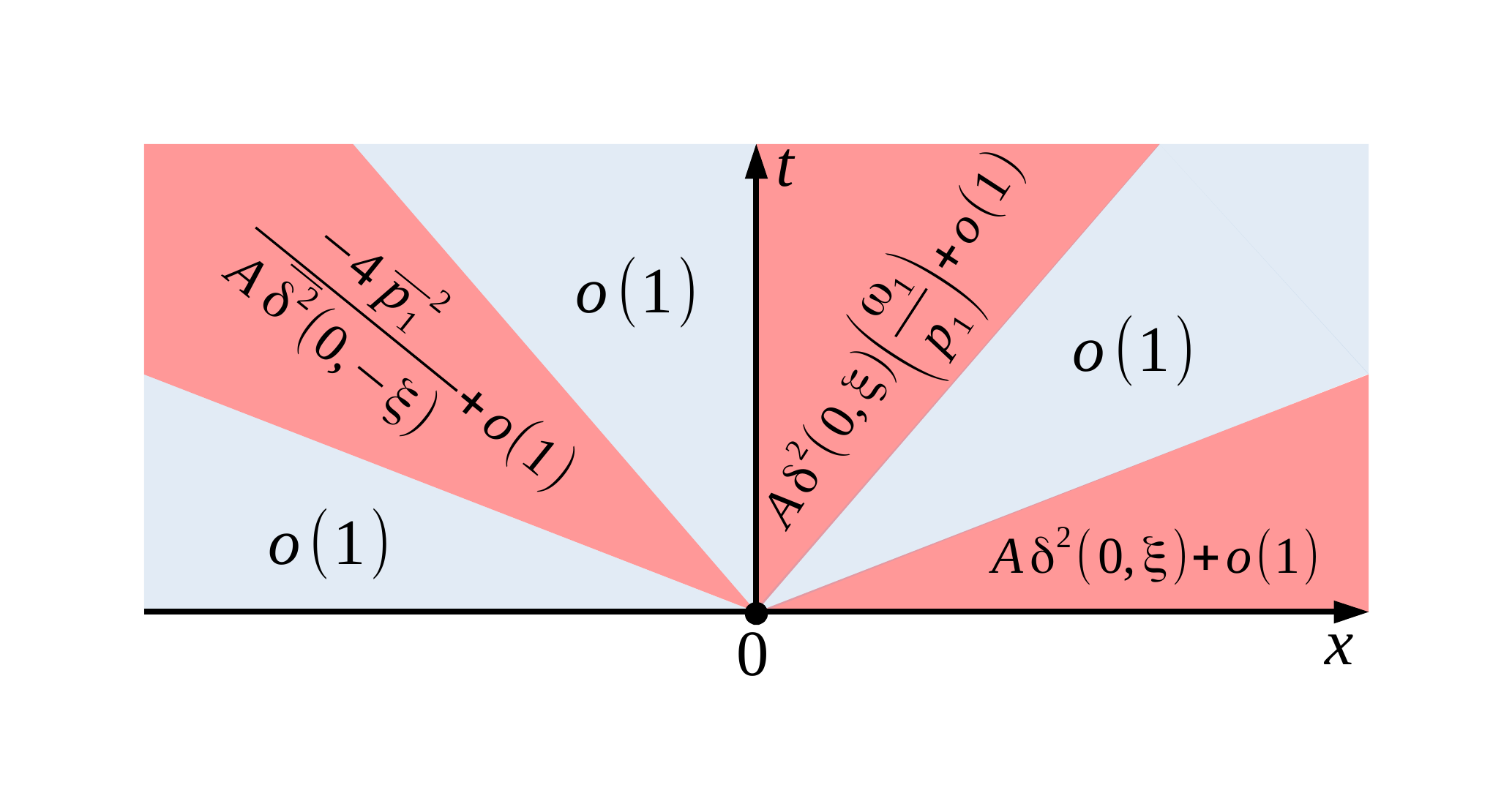}}
\end{minipage}
\caption{Asymptotic behavior of the solution for $n=0$ and $n=1$.}
\label{fas}
\end{figure}
\begin{remark}
The asymptotic formula (\ref{asq1}) holds in the case of the ``shifted step'' initial value (\ref{shifted-step}) with $n=0$ for $0<R<\frac{\pi}{2A}$, and with the corresponding value of 
$n\in\mathbb{N}$ for 
$\frac{(2n-1)\pi}{2A}<R<\frac{(2n+1)\pi}{2A}$. 
\end{remark}

\begin{remark}
Notice, that the spectral functions associated to the ``shifted step'' initial value (\ref{shifted-step}) with $0<R<\frac{\pi}{2A}$ satisfy conditions of the Theorem 1 in \cite{RS2}. This rough asymptotics, as well as the precise one (see Theorem \ref{th1} below) are consistent with that obtained in \cite{RS2}.
\end{remark}

\begin{remark}
The ordering  of $\Re p_j$ and $-\omega_j$, $j=\overline{1,n}$ in (\ref{order}) is crucial for our analysis. Indeed, let  $n=1$ and assume that $-\omega_1<\Re p_1<0$. Then, applying  (\ref{singul}) for $-\omega_1<-\xi<\Re p_1$, we (asymptotically) arrive at the following Riemann-Hilbert problem:
\begin{subequations}
\begin{align}
&\tilde{M}^{as}_+(x,t,k)=\tilde{M}^{as}_-(x,t,k)\tilde{J}^{as}(x,t,k),&& k\in \hat\Gamma,\\
&\tilde{M}^{as}(x,t,k)\to I,&& k\to\infty,\\
\label{res1}
&\underset{k=p_{1}}{\operatorname{Res}}\tilde M^{as\,(2)}(x,t,k)=
f_{1}^{-1}(x,t)(p_1+\omega_1)^2\tilde M^{as\,(1)}(x,t,p_{1}),\\
\label{res2}
&\underset{k=0}{\operatorname{Res}}\tilde M^{as\,(2)}(x,t,k)=
c_{0}\frac{\omega_1^2}{p_1^2}\tilde M^{as\,(1)}(x,t,0),
\end{align}
\end{subequations}
where 
$
\tilde J^{as}(x,t,k)=
\left(\frac{k+\omega_{1}}{k-p_{1}}\right)^{\sigma_3}
\hat{J}(x,t,k)
\left(\frac{k+\omega_{1}}{k-p_{1}}\right)^{-\sigma_3},\,k\in\hat \Gamma,
$ and $f_1^{-1}(x,t)$ is exponentially growing. Since the residue conditions (\ref{res1}) and (\ref{res2}) are formulated  for the same column,
 we cannot proceed  as in the proof of Proposition \ref{asRH} above.
\end{remark}

Applying the nonlinear steepest descent method \cite{DIZ,DZ} we are able to make the
asymptotics presented in (\ref{asq1}) more precise.

\begin{theorem}\label{th1}
Consider the Cauchy problem (\ref{1}) and assume that the initial value $q_0(x)$ converges to its boundary values fast enough and that associated spectral functions $a_j(k)$, $j=1,2$ satisfy Assumptions A.
Assuming that the solution $q(x,t)$ of (\ref{1}) exists, it has the following long-time asymptotics (for convenience of notation we set $\Re p_0 := 0$ and $\prod\limits_{s=m_1}^{m_2}F_s=1$ if $m_1>m_2$):
\begin{description}
\item{\textbf{(i)}} for $-\omega_{n-m+1}<-\xi<\Re p_{n-m}$, $m=\overline{0,n}$ we have three types of asymptotics, depending on the value of $\Im\nu(-\xi)$:
\begin{enumerate}[1)]
\item if $\Im\nu(-\xi)\in\left(-\frac{1}{2}, -\frac{1}{6}\right]$, then
\begin{equation*}
q(x,t)=A\delta^2(0,\xi)
\prod\limits_{s=0}^{m-1}\left(\frac{\omega_{n-s}}{p_{n-s}}\right)^2
+t^{-\frac{1}{2}-\Im\nu(-\xi)}\alpha_1(\xi)\exp\{-4it\xi^2+i\Re\nu(-\xi)\ln t\}
+ R_1(\xi,t).
\end{equation*}
\item if $\Im\nu(-\xi)\in\left(-\frac{1}{6}, \frac{1}{6}\right)$, then
\begin{align}
\nonumber
q(x,t)=&A\delta^2(0,\xi)
\prod\limits_{s=0}^{m-1}\left(\frac{\omega_{n-s}}{p_{n-s}}\right)^2
+t^{-\frac{1}{2}-\Im\nu(-\xi)}\alpha_1(\xi)\exp\{-4it\xi^2+i\Re\nu(-\xi)
\ln t\}\\
\nonumber
&+t^{-\frac{1}{2}+\Im\nu(-\xi)}\alpha_2(\xi)\exp\{4it\xi^2-i\Re\nu(-\xi)
\ln t\}+R_3(\xi,t).
\end{align}
\item if $\Im\nu(-\xi)\in\left[\frac{1}{6}, \frac{1}{2}\right)$, then
\begin{equation*}
q(x,t)=A\delta^2(0,\xi)
\prod\limits_{s=0}^{m-1}\left(\frac{\omega_{n-s}}{p_{n-s}}\right)^2
+t^{-\frac{1}{2}+\Im\nu(-\xi)}\alpha_2(\xi)\exp\{4it\xi^2-i\Re\nu(-\xi)
\ln t\}+R_2(\xi,t).
\end{equation*}
\end{enumerate}
\item{\textbf{(ii)}} for $-\Re p_{n-m}<-\xi<\omega_{n-m+1}$, $m=\overline{0,n}$:
\begin{equation*}
q(x,t)=t^{-\frac{1}{2}-\Im\nu(\xi)}\alpha_3(\xi)
\exp\{4it\xi^2-i\Re\nu(\xi)\ln t\}
+ R_2(-\xi,t)
\end{equation*}
\item{\textbf{(iii)}} for $\Re p_{n-m}<-\xi<-\omega_{n-m}$, $m=\overline{0,n-1}$:
\begin{equation*}
q(x,t)=t^{-\frac{1}{2}+\Im\nu(-\xi)}\alpha_4(\xi)
\exp\{4it\xi^2-i\Re\nu(-\xi)\ln t\}
+ R_2(\xi,t)
\end{equation*}
\item{\textbf{(iv)}} for $\omega_{n-m}<-\xi<-\Re p_{n-m}$, $m=\overline{0,n-1}$ we have three types of asymptotics, depending on the value of $\Im\nu(\xi)$:
\begin{enumerate}[1)]
\item if $\Im\nu(\xi)\in\left(-\frac{1}{2}, -\frac{1}{6}\right]$, then
\begin{equation*}
q(x,t)=\frac{-4\overline{p}_{n-m}^2}{A\overline{\delta^2}(0,-\xi)}
\prod\limits_{s=0}^{m-1}\left(\frac{\overline{p}_{n-s}}
{\overline{\omega}_{n-s}}\right)^2
+t^{-\frac{1}{2}-\Im\nu(\xi)}\alpha_5(\xi)\exp\{4it\xi^2-i\Re\nu(\xi)
\ln t\}+R_1(-\xi,t),
\end{equation*}
\item if $\Im\nu(\xi)\in\left(-\frac{1}{6}, \frac{1}{6}\right)$, then
\begin{align}
\nonumber
q(x,t)=&\frac{-4\overline{p}_{n-m}^2}{A\overline{\delta^2}(0,-\xi)}
\prod\limits_{s=0}^{m-1}\left(\frac{\overline{p}_{n-s}}
{\overline{\omega}_{n-s}}\right)^2
+t^{-\frac{1}{2}-\Im\nu(\xi)}\alpha_5(\xi)\exp\{4it\xi^2-i\Re\nu(\xi)
\ln t\}\\
\nonumber
&+t^{-\frac{1}{2}+\Im\nu(\xi)}\alpha_6(\xi)\exp\{-4it\xi^2+i\Re\nu(\xi)
\ln t\}+R_3(-\xi,t).
\end{align}
\item if $\Im\nu(
\xi)\in\left[\frac{1}{6}, \frac{1}{2}\right)$, then
\begin{equation*}
q(x,t)=\frac{-4\overline{p}_{n-m}^2}{A\overline{\delta^2}(0,-\xi)}
\prod\limits_{s=0}^{m-1}\left(\frac{\overline{p}_{n-s}}
{\overline{\omega}_{n-s}}\right)^2
+t^{-\frac{1}{2}+\Im\nu(\xi)}\alpha_6(\xi)\exp\{-4it\xi^2+i\Re\nu(\xi)\ln t\}
+ R_2(-\xi,t).
\end{equation*}
\end{enumerate}
\end{description}
Here
\begin{equation}
\delta(k,\xi)=
(k+\xi)^{i\nu(-\xi)}\prod\limits_{s=0}^{m-1}(k+\omega_{n-s})^{-1}
\exp\left\{\sum\limits_{s=0}^{m}\chi_s(k)\right\},
\end{equation}
and
\begin{equation}
\nu(-\xi)=-\frac{1}{2\pi}\ln|1+r_1(-\xi)r_2(-\xi)|
-\frac{i}{2\pi}\left(\int_{-\infty}^{-\xi}\,
d\arg(1+r_1(\zeta)r_2(\zeta))+2\pi m\right),
\end{equation}
with
\begin{subequations}
	\begin{align}
	&\chi_s(k)=-\frac{1}{2\pi i}\int_{-\omega_{n-s+1}}^{-\omega_{n-s}}
	\ln(k-\zeta)\,d_{\zeta}\ln(1+r_1(\zeta)r_2(\zeta)),\quad s=\overline{0,m-1},\\
	&\chi_m(k)=-\frac{1}{2\pi i}\int_{-\omega_{n-m+1}}^{-\xi}
	\ln(k-\zeta)\,d_{\zeta}\ln(1+r_1(\zeta)r_2(\zeta)).
	\end{align}
\end{subequations}
The constants $\alpha_j(\xi)$, $j=\overline{1,6}$ are as follows:
$$
\alpha_1(\xi)=
\frac{\sqrt{\pi}(c_0^{as}(\xi))^2\prod\limits_{s=0}^{m-1}(\xi+p_{n-s})^2}
{\xi^2 r_2(-\xi)\Gamma(i\nu(-\xi))}
\exp\left\{-\frac{\pi}{2}\nu(-\xi)+\frac{3\pi i}{4}-2\sum\limits_{s=0}^{m}
\chi_s(-\xi)+3i\nu(-\xi)\ln 2\right\},
$$
$$
\alpha_2(\xi)=
\frac{\sqrt{\pi}\prod\limits_{s=0}^{m-1}(\xi+p_{n-s})^{-2}}
{r_1(-\xi)\Gamma(-i\nu(-\xi))}
\exp\left\{-\frac{\pi}{2}\nu(-\xi)+\frac{\pi i}{4}+2\sum\limits_{s=0}^{m}
\chi_s(-\xi)-3i\nu(-\xi)\ln 2\right\},
$$
$$
\alpha_3(\xi)=
\frac{\sqrt{\pi}\prod\limits_{s=0}^{m-1}(\overline{p}_{n-s}-\xi)^{2}}
{\overline{r_2}(\xi)\Gamma(-i\overline{\nu(\xi)})}
\exp\left\{-\frac{\pi}{2}\overline{\nu(\xi)}+\frac{\pi i}{4}-2\sum\limits_{s=0}^{m}
\overline{\chi_s(\xi)}-3i\overline{\nu(\xi)}\ln 2\right\},
$$
$$
\alpha_4(\xi)=
\frac{\sqrt{\pi}\xi^2\prod\limits_{s=0}^{m}(\xi+p_{n-s})^{-2}}
{r_1(-\xi)\Gamma(-i\nu(-\xi))}
\exp\left\{-\frac{\pi}{2}\nu(-\xi)+\frac{\pi i}{4}+2\sum\limits_{s=0}^{m}
\chi_s(-\xi)-3i\nu(-\xi)\ln 2\right\},
$$
$$
\alpha_5(\xi)=
\frac{\sqrt{\pi}\prod\limits_{s=0}^{m}(\overline{p}_{n-s}-\xi)^{2}}
{\xi^2\overline{r_2}(\xi)\Gamma(-i\overline{\nu(\xi)})}
\exp\left\{-\frac{\pi}{2}\overline{\nu(\xi)}+\frac{\pi i}{4}
-2\sum\limits_{s=0}^{m}\overline{\chi_s(\xi)}-3i\overline{\nu(\xi)}
\ln 2\right\},
$$
$$
\alpha_6(\xi)=
\frac{\sqrt{\pi}\left(\overline{c_0^{as\#}(-\xi)}\right)^2
\prod\limits_{s=0}^{m}(\overline{p}_{n-s}-\xi)^{-2}}
{\overline{r_1}(\xi)\Gamma(i\overline{\nu(\xi)})}
\exp\left\{-\frac{\pi}{2}\overline{\nu(\xi)}+\frac{3\pi i}{4}
+2\sum\limits_{s=0}^{m}\overline{\chi_s(\xi)}+3i\overline{\nu(\xi)}
\ln 2\right\},
$$
where 
$$
c_{0}^{as}(\xi)=\frac{A\delta^2(0,\xi)}{2i}
\prod\limits_{s=0}^{m-1}\left(\frac{\omega_{n-s}}{p_{n-s}}\right)^2,\quad
c_0^{as\#}(\xi)=\frac{2ip_{n-m}^2}{A\delta^2(0,\xi)}
\prod\limits_{s=0}^{m-1}\left(\frac{p_{n-s}}{\omega_{n-s}}\right)^2.
$$
Finally, the remainders $R_j(\xi,t)$, $j=\overline{1,3}$ are as follows:
\begin{equation}
\label{R1}
R_1(\xi,t)=
\begin{cases}
O\left(t^{-1}\right),& \Im\nu(-\xi)>0,\\
O\left(t^{-1}\ln t\right),&\Im\nu(-\xi)=0,\\
O\left(t^{-1+2|\Im\nu(-\xi)|}\right),&\Im\nu(-\xi)<0,
\end{cases}
\end{equation}
\begin{equation}
\label{R2}
R_2(\xi,t)=
\begin{cases}
O\left(t^{-1+2|\Im\nu(-\xi)|}\right),& \Im\nu(-\xi)>0,\\
O\left(t^{-1}\ln t\right),&\Im\nu(-\xi)=0,\\
O\left(t^{-1}\right),&\Im\nu(-\xi)<0,
\end{cases}
\end{equation}
and
\begin{equation*}
R_3(\xi,t)=R_1(\xi,t) + R_2(\xi,t)=
\begin{cases}
O\left(t^{-1+2|\Im\nu(-\xi)|}\right),&\Im\nu(-\xi)\not=0,\\
O\left(t^{-1}\ln t\right),&\Im\nu(-\xi)=0.
\end{cases}
\end{equation*}
\end{theorem}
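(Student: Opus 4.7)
}
The plan is to continue the nonlinear steepest descent chain started in Proposition \ref{asRH} all the way to an explicitly solvable local model at the stationary phase point $k_0=-\xi$ of $\theta(k,\xi)=4k\xi+2k^2$, and to read off the constants $\alpha_j(\xi)$ and error rates from the well-known parabolic-cylinder model solution. The starting points are the two model RH problems (\ref{RHas1}) and (\ref{RHas2}) provided by Proposition \ref{asRH}, which differ only in the location of the residue condition at $k=0$ (first versus second column) and in a rational conjugation factor; the factor $\delta(k,\xi)$ already absorbs the diagonal jump and, by Remark \ref{rem6}, guarantees $\Im\nu(-\xi)\in(-\tfrac12,\tfrac12)$, which is what makes the local model convergent.

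First, I would verify that on the contour $\hat\Gamma$ the off-diagonal entries of $J^{as}(\xi,t,k)$ decay to $0$ exponentially fast as $t\to\infty$ uniformly on any closed subset of $\hat\Gamma$ bounded away from the stationary phase point $-\xi$: this is immediate from the signature of $\Im\theta(k,\xi)$ drawn on Figure \ref{signtable} together with the analytic continuation of $r_1,r_2$ used to define $\hat M$. Hence only a neighbourhood of $k=-\xi$ contributes to the leading asymptotics. Introduce the scaled variable $z=\sqrt{8t}\,(k+\xi)$ so that $2it\theta(k,\xi)=-4it\xi^2+\tfrac{iz^2}{2}$, and factor
\begin{equation*}
\delta(k,\xi)=(k+\xi)^{i\nu(-\xi)}\,\delta_{\mathrm{reg}}(k,\xi),\qquad
\delta_{\mathrm{reg}}(-\xi,\xi)=\prod_{s=0}^{m-1}(-\xi+\omega_{n-s})^{-1}\exp\Bigl\{\sum_{s=0}^{m}\chi_s(-\xi)\Bigr\}.
\end{equation*}
After absorbing $\delta_{\mathrm{reg}}$ and the rational factor $\prod_{s=0}^{m-1}(k+\omega_{n-s})/(k-p_{n-s})$ (and, in case (ii), also $d(k)$) evaluated at $k=-\xi$ into constant gauges, the RH problem on a small disc around $-\xi$ becomes, up to exponentially small error, the classical Its--Deift--Zhou parabolic cylinder model with parameter $\nu(-\xi)$ and effective reflection coefficients $r_1(-\xi)$, $r_2(-\xi)$. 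Its solution $M^{\mathrm{PC}}(z)$ is known explicitly; in particular
\begin{equation*}
M^{\mathrm{PC}}(z)=I+\frac{1}{z}\begin{pmatrix}0 & \beta_{12}\\ \beta_{21} & 0\end{pmatrix}+O(z^{-2}),
\end{equation*}
with $\beta_{12}$ and $\beta_{21}$ expressed through $\Gamma(\pm i\nu(-\xi))$, $r_{1,2}(-\xi)$ and the phase $e^{\mp\pi\nu(-\xi)/2\mp 3i\nu(-\xi)\ln 2\pm i\pi/4}$.

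Next, I would construct a global parametrix by gluing $M^{\mathrm{PC}}$ inside a small disc around $-\xi$ to the identity outside, define the error function $E=\hat M^{as}\cdot(\text{parametrix})^{-1}$, and show that $E$ solves a small-norm RH problem whose jump tends to $I$ in $L^2\cap L^\infty$ at rate $O(t^{-1/2})$ away from $-\xi$ and at rate $O(t^{-1/2+|\Im\nu(-\xi)|})$ on the boundary of the disc. The standard small-norm theory then gives $E=I+O(t^{-1/2+|\Im\nu(-\xi)|})$ in the relevant norms, whence
\begin{equation*}
q(x,t)=2i\lim_{k\to\infty}k M_{12}^{as}(\xi,t,k)+(\text{exp. small})
\end{equation*}
is the sum of the constant term produced by the rational gauge (which reproduces the $A\delta^2(0,\xi)\prod(\omega_{n-s}/p_{n-s})^2$ contribution in case (i), or its analogue in case (iii)) plus the $(1,2)$-entry $\beta_{12}/\sqrt{8t}$ rescaled back in $z$, multiplied by the local gauge factor $\delta_{\mathrm{reg}}^2(-\xi,\xi)$ and $e^{-4it\xi^2+i\Re\nu(-\xi)\ln t}$. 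Case (ii) (the ``decaying'' sector) is treated identically but with the reduction (\ref{RHas2}) rather than (\ref{RHas1}), and case (iv) is the symmetric counterpart of (i) obtained through (\ref{sol1}).

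The splitting into the three sub-cases for $\Im\nu(-\xi)$ comes from an honest bookkeeping of the two competing stationary-phase corrections: the $(1,2)$ term of order $t^{-1/2-\Im\nu(-\xi)}$ and the $(2,1)$ term of order $t^{-1/2+\Im\nu(-\xi)}$. When $\Im\nu(-\xi)\le-\tfrac16$ the first dominates and the second is smaller than the $O(t^{-1+2|\Im\nu|})$ error from the small-norm estimate, so only one term survives explicitly; when $|\Im\nu(-\xi)|<\tfrac16$ both exceed the error and must be kept; when $\Im\nu(-\xi)\ge\tfrac16$ only the second survives. This produces (\ref{R1})--(\ref{R2}) and the trichotomies in (i) and (iv). The main technical obstacle, and where I would spend the bulk of the work, is precisely this uniform control of the small-norm problem together with the precise tracking of the factors coming from $\delta_{\mathrm{reg}}(-\xi,\xi)$, $\prod(\xi+p_{n-s})^{\pm 2}$, the factor $d(-\xi)=\xi/(\xi+p_{n-m})$ in case (ii)/(iv), and $c_0^{as}(\xi)$, $c_0^{as\#}(\xi)$: these combine multiplicatively into the closed-form coefficients $\alpha_j(\xi)$ stated in the theorem, and the delicate point is to ensure that the conjugations used to bring the local problem to the canonical form of Its--Deift--Zhou are exactly those that produce the stated exponential phases $e^{\mp 3i\nu(-\xi)\ln 2}$, $e^{\pm i\pi/4}$, $e^{\pm 3i\pi/4}$ without hidden branch ambiguities coming from the logarithmic singularity of $\delta$ at $k=-\xi$.
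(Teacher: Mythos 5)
Your outline follows essentially the same route as the paper's (itself only sketched) proof: start from the model problems (\ref{RHas1})--(\ref{RHas2}) of Proposition \ref{asRH}, rescale by $z=\sqrt{8t}(k+\xi)$, install the Its--Deift--Zhou parabolic-cylinder parametrix at $k=-\xi$ with parameter $\nu(-\xi)$, control the error by a small-norm/singular-integral argument, and obtain the trichotomy in $\Im\nu(-\xi)$ by comparing the two competing powers $t^{-1/2\mp\Im\nu(-\xi)}$ against the remainder. The cosmetic differences (you absorb the rational factors $\prod_s\frac{k+\omega_{n-s}}{k-p_{n-s}}$ and $d(k)$ into constant gauges at $k=-\xi$, the paper folds them into modified reflection coefficients $r_j^{as}(k)$ as in (\ref{ras}); you phrase the error analysis as a global-parametrix/small-norm problem, the paper uses the Cauchy-operator equation $\mu-C_w\mu=I$ and the representation (\ref{M-int-rep})) are immaterial.

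There is, however, one concrete gap in your outline: you glue the parabolic-cylinder model to the \emph{identity} outside a disc around $-\xi$, but the model problems carry a residue condition at $k=0$ (condition (\ref{RHas1c}), resp.\ its analogue in (\ref{RHas2})), so the error function $E$ built against that parametrix is not small --- it still has a pole at $k=0$. The paper first converts the residue at $k=0$ into a jump on a small circle $S_0$ and then conjugates by the triangular matrix $V(k)=\left(\begin{smallmatrix}1 & -c_0^{as}(\xi)/k\\ 0 & 1\end{smallmatrix}\right)$ (see (\ref{breveM})--(\ref{J-breve})); your parametrix must include this piece at $k=0$. This is not a technicality one can wave away, because the $V$-conjugation is exactly what produces both the non-decaying constant $2ic_0^{as}(\xi)=A\delta^2(0,\xi)\prod_s(\omega_{n-s}/p_{n-s})^2$ in $q(x,t)$ (you attribute it, somewhat inaccurately, to the ``rational gauge'') and the cross term $\frac{(c_0^{as}(\xi))^2}{\xi^2}B_{21}$ in the $(1,2)$ entry, i.e.\ the $t^{-1/2-\Im\nu(-\xi)}\alpha_1$ contribution with its factor $(c_0^{as}(\xi))^2/\xi^2$. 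Relatedly, your bookkeeping of the two corrections is swapped: since $|(8t)^{\pm i\nu}|=(8t)^{\mp\Im\nu}$, it is the $(2,1)$ entry $B_{21}$ of the local model (transported to the $(1,2)$ position by $V$) that carries the order $t^{-1/2-\Im\nu(-\xi)}$ and the coefficient $\propto 1/(r_2(-\xi)\Gamma(i\nu(-\xi)))$, while $B_{12}$ carries $t^{-1/2+\Im\nu(-\xi)}$; with your labelling the coefficients $\alpha_1,\alpha_2$ would come out interchanged. Once the $k=0$ piece of the parametrix is added and this accounting corrected, the rest of your plan goes through as in the paper.
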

\noindent\textit{Sketch of proof of Theorem \ref{th1}}.
We apply the nonlinear steepest descent method to 
 the Riemann-Hilbert problems (\ref{RHas1}) and (\ref{RHas2}). The implementation of the method is close to that presented in \cite{RS}, so here we briefly describe the main steps of the proof, paying attention to its peculiarities due to Assumptions A and referring the reader to \cite{RS} for details.

We begin with the asymptotics for the Riemann-Hilbert problem (\ref{RHas1}), 
the analysis for (\ref{RHas2}) being similar (see also Remark \ref{remRHas}). 
First, we reformulate (\ref{RHas1}) in such a way that instead of the residue condition we have the jump across a small counterclockwise oriented circle $S_0$ centered at  $k=0$:
\begin{equation}
\nonumber
\check M^{as}(x,t,k)=
\begin{cases}
M^{as}(x,t,k)
\begin{pmatrix}
1& -\frac{c_0^{as}}{k}\\
0& 1\\
\end{pmatrix},
& k\mbox{ inside } S_{0},\\
M^{as}(x,t,k),
& \mbox{ otherwise }.
\end{cases}
\end{equation}
Then $\check M^{as}(x,t,k)$ solves the  Riemann-Hilbert problem
\begin{subequations}
\begin{align}
&\check M^{as}_+(x,t,k)=\check M^{as}_-(x,t,k)\check J^{as}(x,t,k),&& k\in\hat\Gamma\cup S_0,\\
&\check M^{as}(x,t,k)\to I, && k\to\infty,
\end{align}
\end{subequations}
with
\begin{equation}
\check J^{as}(x,t,k)=
\begin{cases}
J^{as}(x,t,k),& k\in\hat\Gamma,\\
\begin{pmatrix}
1& -\frac{c_0^{as}}{k}\\
0& 1
\end{pmatrix}, & k\in S_0.
\end{cases}
\end{equation}
Introduce the rescaled variable $z$ 
by
\begin{equation}\label{k-z}
k=\frac{z}{\sqrt{8t}}-\xi,
\end{equation}
so that 
\[
e^{2it\theta} = e^{\frac{iz^2}{2}-4it\xi^2}.
\]
Introduce the ``local parametrix'' $\check m^{as}_0(x,t,k)$
as the solution of a RH problem with the ``simplified'' jump matrix $J^{as}(x,t,k)$
in the sense that in its construction, $r_j(k)$, $j=1,2$ are replaced by the constants $r_j(-\xi)$
and $\delta(k,\xi;\{\omega_{n-s}\}_{s=0}^{m-1})$ is replaced by (cf. (\ref{delta}))
$$
\delta\simeq\left(\frac{z}{\sqrt{8t}}\right)^{i\nu(-\xi)}
\prod\limits_{s=0}^{m-1}(\omega_{n-s}-\xi)^{-1}
\exp\left\{\sum\limits_{s=0}^{m}\chi_s(-\xi)\right\}.
$$
Such RH problem can be solved explicitly 
in terms of the parabolic cylinder functions \cite{I1,RS}.

Indeed, $\check m^{as}_0(x,t,k)$ (cf. with $\tilde m_0(x,t,k)$ in \cite{RS}) can be determined by 
\begin{equation}\label{m0-R}
\check m^{as}_0(x,t,k)=\Delta(\xi,t)m^{\Gamma}(\xi,z(k)) \Delta^{-1}(\xi,t),
\end{equation}
where 
\begin{equation}\label{Delta}
\Delta(\xi,t) = e^{(2 i t \xi^2 + \sum\limits_{s=0}^{m}\chi_s(-\xi))\sigma_3}
\left((8t)^{\frac{i\nu(-\xi)}{2}}\prod\limits_{s=0}^{m-1}(\omega_{n-s}-\xi)\right)^{-\sigma_3},
\end{equation}
$m^{\Gamma}(\xi,z)$ is determined by 
\begin{equation}\label{m-g-0}
m^{\Gamma}(\xi,z) = m_0(\xi,z) D^{-1}_{j}(\xi,z),\qquad z\in\Omega_j,\,\,j=\overline{0,4},
\end{equation}
see Figure \ref{mod2},
where 
$$
D_0(\xi,z)=e^{-i\frac{z^2}{4}\sigma_3}z^{i\nu(-\xi)\sigma_3},
$$
\begin{equation}
\nonumber
\begin{matrix}
D_1(\xi,z)=D_0(\xi,z)
\begin{pmatrix}
1& \frac{r^{as}_2(-\xi)}{1+ r^{as}_1(-\xi)r^{as}_2(-\xi)}\\
0& 1\\
\end{pmatrix},
&&
D_2(\xi,z)=D_0(\xi,z)
\begin{pmatrix}
1& 0\\
r^{as}_1(-\xi)& 1\\
\end{pmatrix},\\
D_3(\xi,z)=D_0(\xi,z)
\begin{pmatrix}
1& - r^{as}_2(-\xi)\\
0& 1\\
\end{pmatrix},
&&
D_4(\xi,z)=D_0(\xi,z)
\begin{pmatrix}
1& 0\\
\frac{-r^{as}_1(-\xi)}{1+ r^{as}_1(-\xi)r^{as}_2(-\xi)}& 1\\
\end{pmatrix},
\end{matrix}
\end{equation}
with
\begin{equation}\label{ras}
r^{as}_1(k)=r_1(k)
\prod\limits_{s=0}^{m-1}\left(\frac{k-p_{n-s}}{k+\omega_{n-s}}\right)^2,
\quad
r^{as}_2(k)=r_2(k)
\prod\limits_{s=0}^{m-1}\left(\frac{k+\omega_{n-s}}{k-p_{n-s}}\right)^2,
\end{equation}
and $m_0(\xi,z)$ is the solution of the following RH problem in $z$-plane, 
relative to $\mathbb R$,
with a constant jump matrix:
\begin{subequations}\label{as8}
\begin{align}
&m_{0+}(\xi,z)=m_{0-}(\xi,z)j_0(\xi),&& z\in\mathbb{R},\\
&m_0(\xi,z)= \left(I+O(1/z)\right)
e^{-i\frac{z^2}{4}\sigma_3}z^{i\nu(-\xi)\sigma_3},&& z\rightarrow\infty,
\end{align}
\end{subequations}
where  
\begin{equation}\label{j0}
j_0(\xi)=
\begin{pmatrix}
1+ r^{as}_1(-\xi)r^{as}_2(-\xi) & r^{as}_2(-\xi)\\
r^{as}_1(-\xi) & 1
\end{pmatrix}.
\end{equation}

\begin{figure}[h]
	\begin{minipage}[h]{0.99\linewidth}
		\centering{\includegraphics[width=0.5\linewidth]{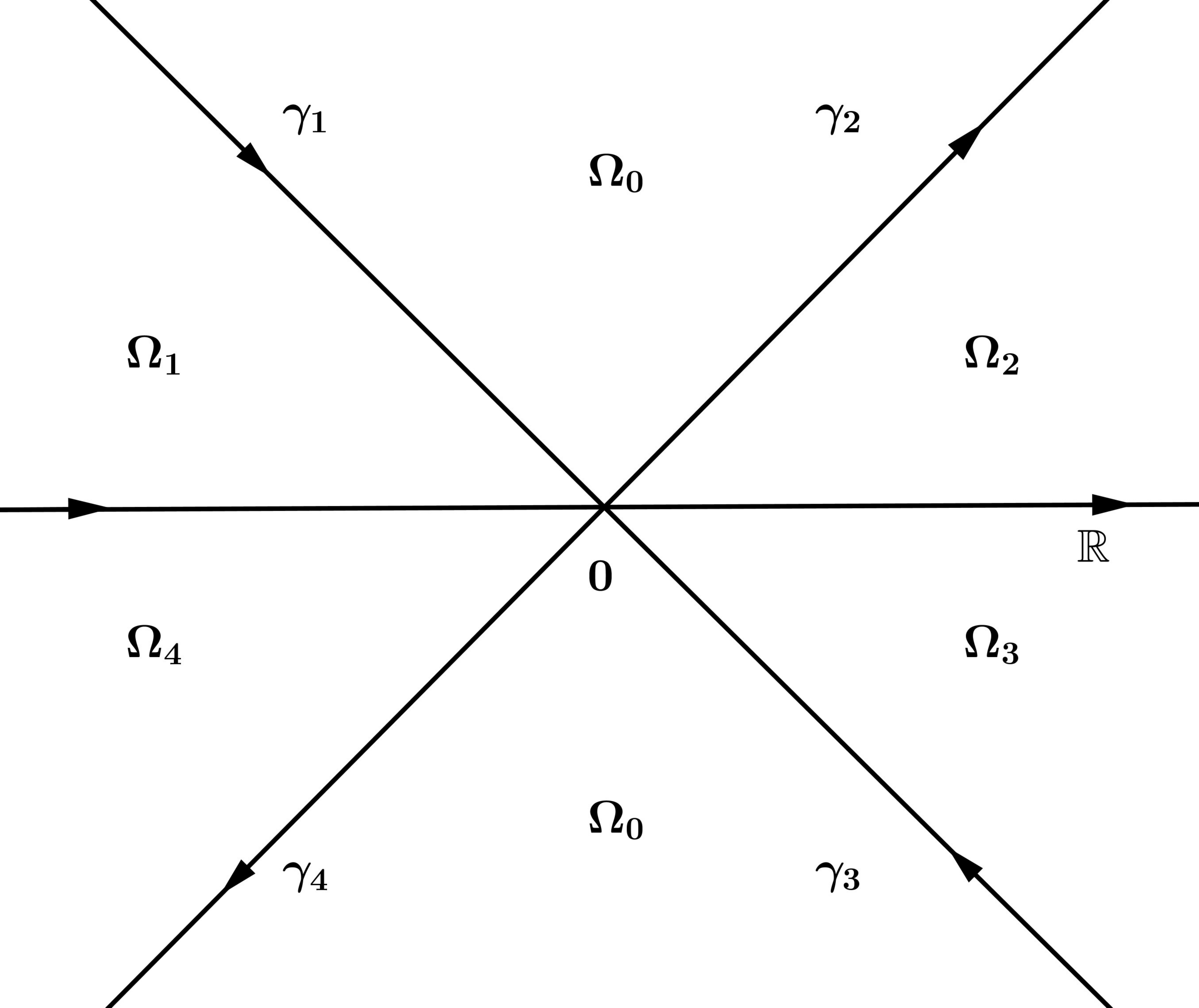}}
		\caption{Contour and domains for $m^{\Gamma}(\xi,z)$ in the $z$-plane  }
		\label{mod2}
	\end{minipage}
\end{figure}

It is the RH problem for $m_0(\xi,z)$ that can be solved explicitly, in terms of the parabolic
cylinder functions, see, e.g., Appendix A in \cite{RS}.
Since we are interested what happens for large $t$, we actually need from $m_0(\xi,z)$ (and, correspondingly, 
$m^{\Gamma}(\xi,z)$) its large-$z$ asymptotics only. The latter has  the form
\[
m^{\Gamma}(\xi,z) = I + \frac{i}{z}\begin{pmatrix}
0 & \beta(\xi) \\ -\gamma(\xi) & 0
\end{pmatrix} + O(z^{-2}), \qquad z\to \infty,
\]
where
\begin{subequations}\label{be-ga-R}
\begin{align}
\beta(\xi)=\dfrac{\sqrt{2\pi}e^{-\frac{\pi}{2}\nu(-\xi)}e^{-\frac{3\pi i}{4}}}{r^{as}_1(-\xi)
\Gamma(-i\nu(-\xi))},\\
\gamma(\xi)=\dfrac{\sqrt{2\pi}e^{-\frac{\pi}{2}\nu(-\xi)}e^{-\frac{\pi i}{4}}}{r^{as}_2(-\xi)\Gamma(i\nu(-\xi))}.
\end{align}
\end{subequations}
Now, having defined the parametrix $\check m^{as}_0(x,t,k)$,
we define $\breve M^{as}(x,t,k)$ as follows (cf.  $\hat{m}(x,t,k)$ in \cite{RS}):
\begin{equation}\label{breveM}
\breve M^{as}(x,t,k) = 
\begin{cases}
\check M^{as}(x,t,k)(\check m_0^{as})^{-1}(x,t,k)V(k), & 
k\mbox{ inside } S_{-\xi}, \\
\check M^{as}(x,t,k), & k\mbox{ inside } S_0,\\
\check M^{as}(x,t,k)V(k), & \mbox{ otherwise },
\end{cases}
\end{equation}
where $V(k)=\begin{pmatrix}1& -\frac{c_0^{as}}{k}\\0& 1 \end{pmatrix}$,
and $S_{-\xi}$ is a small counterclockwise oriented circle centered at  $k=-\xi$.
Then the sectionally analytic matrix $\breve M^{as}$ solves the following Riemann-Hilbert problem on the contour  
$\hat\Gamma_1=\hat\Gamma\cup S_{-\xi}$:
\begin{align}\label{M-breve-RHP}
&\breve M^{as}_+(x,t,k) = \breve M^{as}_-(x,t,k)\breve J^{as}(x,t,k), && k\in\hat\Gamma_1,\\
&\breve M^{as}(x,t,k)\to I, && k\to \infty,
\end{align}
with the jump matrix (cf.  (3.23) in \cite{RS})
\begin{equation}\label{J-breve}
\breve J^{as}(x,t,k) = 
\begin{cases}
V^{-1}(k)\check m_{0-}^{as}(x,t,k) \check J^{as}(x,t,k)
(\check m_{0+}^{as})^{-1}(x,t,k)
V(k), & k\in \hat\Gamma_1, k\mbox{ inside } S_{-\xi},\\
V^{-1}(k)(\check m_{0}^{as})^{-1} (x,t,k)V(k), & k\in S_{-\xi}, \\
V^{-1}(k)\check J^{as}(x,t,k)V(k), & \text{otherwise}.
\end{cases}
\end{equation}
Observe that the solution of the original problem is given in terms of 
 $\breve M^{as}(x,t,k)$ as follows:
\begin{equation}
q(x,t)=2i\left(c_0^{as}+\lim_{k\to\infty}k\breve M^{as}_{12}(x,t,k)\right)
\end{equation}
and
\begin{equation}
q(-x,t)=-2i\lim_{k\to\infty}k\overline{\breve M^{as}_{21}(x,t,k)}.
\end{equation}

Notice that $(\check m_{0}^{as})^{-1} (x,t,k)$ has the following large-$t$ asymptotics:
\begin{equation}\label{m_0}
(\check m_{0}^{as})^{-1} (x,t,k)=\Delta(\xi,t)
(m^{\Gamma})^{-1}(\xi,\sqrt{8t}(k+\xi))\Delta^{-1}(\xi,t)=
I+\frac{B(\xi,t)}{\sqrt{8t}(k+\xi)}+\tilde{r}(\xi,t),
\end{equation}
where the entries of $B(\xi,t)$ are as follows (cf. with (3.32) in \cite{RS}):
\begin{subequations}\label{B}
	\begin{align}
	&B_{11}(\xi,t)=B_{22}(\xi,t)=0,\\
	&B_{12}(\xi,t)=-i\beta(\xi)e^{4it\xi^2+2\sum\limits_{s=0}^{m}\chi_s(-\xi)}
	(8t)^{-i\nu(-\xi)}\prod\limits_{s=0}^{m-1}(\omega_{n-s}-\xi)^{-2},\\
	&B_{21}(\xi,t)=i\gamma(\xi)e^{-4it\xi^2-2\sum\limits_{s=0}^{m}\chi_s(-\xi)}
	(8t)^{i\nu(-\xi)}\prod\limits_{s=0}^{m-1}(\omega_{n-s}-\xi)^{2},
	\end{align}
\end{subequations}
and the remainder is (cf.  (3.33) in \cite{RS}):
\begin{equation}
\tilde r(\xi,t)=
\begin{pmatrix}
O\left(t^{-1-\Im\nu(-\xi)}\right)& O\left(t^{-1+\Im\nu(-\xi)}\right)\\
O\left(t^{-1-\Im\nu(-\xi)}\right)& O\left(t^{-1+\Im\nu(-\xi)}\right)
\end{pmatrix},\quad t\to\infty.
\end{equation}
Further, we evaluate asymptotics of  $\breve M^{as}(x,t,k)$ as $t\to\infty$ using its integral representation in terms of the solution of the  singular integral equation:
\begin{equation}\label{M-int-rep}
\breve M^{as}(x,t,k) = I+\frac{1}{2\pi i}\int_{\hat\Gamma_1}\mu(x,t,s)
(\breve J^{as}(x,t,s)-I)
\frac{ds}{s-k},
\end{equation}
where $\mu$ solves the integral equation $\mu -C_w \mu = I$, with 
$w=\breve J^{as} - I$ and the Cauchy-type operator $C_w$ defined as follows:
\[
C_w f=(C_-f)(k)=\frac{1}{2\pi i}\lim_{
	\substack{k'\to k \\  k'\in -side}}\int_{\hat\Gamma_1}\frac{f(s)}{s-k'}ds.
\]
Since $V(k)$ is uniformly bounded on $\hat\Gamma_1$ and does not depend on $t$ and $x$, we can proceed as in \cite{RS} and conclude that the main term in the large-$t$ evaluation 
of $\breve M^{as}$ in (\ref{M-int-rep}) is given by the integral along the  circle
$S_{-\xi}$.  In this way we obtain the following representation for 
$\breve M^{as}(x,t,k)$ (see (3.30) and (3.34) in \cite{RS}):
\begin{align}\label{M^R}
\nonumber
\lim_{k\to\infty} k\left(\breve M^{as}(x,t,k) - I\right)&=
-\frac{1}{2\pi i}\int_{S_{-\xi}}V(k)\left((\check m_0^{as})^{-1}(x,t,k) - I\right)V^{-1}(k)\,dk + R(\xi,t).
\end{align}
Taking into account (\ref{m_0}) we conclude that
\begin{equation}\label{M^R1}
\lim_{k\to\infty} k\left(\breve M^{as}(x,t,k) - I\right)=
B^{as}(\xi,t)+R(\xi,t),
\end{equation}
where $R(\xi,t)=
\begin{pmatrix}
R_1(\xi,t)& R_1(\xi,t) + R_2(\xi,t)\\ 
R_1(\xi,t)& R_1(\xi,t) + R_2(\xi,t)
\end{pmatrix}$ and (see (\ref{B}))
\begin{equation}
B^{as}(\xi,t)=\frac{1}{\sqrt{8t}}
\begin{pmatrix}
\frac{c_0^{as}(\xi)}{\xi}B_{21}(\xi,t)&
\frac{(c_0^{as}(\xi))^2}{\xi^2}B_{21}(\xi,t)-B_{12}(\xi,t)\\
-B_{21}(\xi,t)& -\frac{c_0^{as}(\xi)}{\xi}B_{21}(\xi,t)
\end{pmatrix}.
\end{equation}
\begin{remark}\label{remRHas}
In the analysis of the Riemann-Hilbert problem (\ref{RHas2}),
 the reflection coefficients $r_j^{as}(k)$, $j=1,2$ (see (\ref{ras})) have the form
$$
r^{as}_1(k)=r_1(k)d^{-2}(k)
\prod\limits_{s=0}^{m-1}\left(\frac{k-p_{n-s}}{k+\omega_{n-s}}\right)^2,
\quad
r^{as}_2(k)=r_2(k)d^2(k)
\prod\limits_{s=0}^{m-1}\left(\frac{k+\omega_{n-s}}{k-p_{n-s}}\right)^2,
$$ where $d(k)=\frac{k}{k-p_{n-m}}$. 
Moreover,
$
V(k)=\begin{pmatrix}1& 0\\-\frac{c_0^{as\#}(\xi)}{k}& 1 \end{pmatrix}
$
in the definition of $\breve M^{as}(x,t,k)$ (see (\ref{breveM})).
Therefore,
\begin{align}
&q(x,t)=2i\lim_{k\to\infty}k\breve M^{as}_{12}(x,t,k),\\
&q(-x,t)=-2i\left(\overline{c_0^{as\#}(\xi)}+\lim_{k\to\infty}k\overline{\breve M^{as}_{21}(x,t,k)}\right),
\end{align}
and  $B^{as}(\xi,t)$ and $R(\xi,t)$ in (\ref{M^R1}) are as follows:
\begin{equation}
B^{as}(\xi,t)=\frac{1}{\sqrt{8t}}
\begin{pmatrix}
-\frac{c_0^{as\#}(\xi)}{\xi}B_{12}(\xi,t)&
-B_{12}(\xi,t)\\
\frac{(c_0^{as\#}(\xi))^2}{\xi^2}B_{12}(\xi,t)-B_{21}(\xi,t)& \frac{c_0^{as\#}(\xi)}{\xi}B_{12}(\xi,t)
\end{pmatrix},
\end{equation}
and  $R(\xi,t)=
\begin{pmatrix}
R_1(\xi,t) + R_2(\xi,t)& R_2(\xi,t)\\ 
R_1(\xi,t) + R_2(\xi,t)& R_2(\xi,t)
\end{pmatrix}$.
\end{remark}


\end{document}